\newcommand{\tf}{d}
\newcommand{\nuser}{n}
\newcommand{\ecri}{L}
\newcommand{\E}{\mathrm{E}}
\renewcommand{\P}{\mathrm{P}}
\newcommand{\gk}[1]{\left\{#1\right\}}
\newcommand{\hk}[1]{^{(#1)}}
\newcommand{\ek}[1]{\left[#1\right]}
\newcommand{\rk}[1]{\left(#1\right)}
\newcommand{\Pfrak}{\mathfrak{P}}
\renewcommand{\l}{\lambda}
\newcommand{\ex}{\mathrm{e}}
\newcommand{\Ocal}   {{\mathcal O }} 
\renewcommand{\d}{\mathrm{d}}
\newcommand{\mgfl}{Q} 
\newcommand{\tmgfl}{\widetilde{Q}}
\newcommand{\scip}{\mathrm{M}}
\newcommand{\1}{\mathbbm{1}\!}
\newcommand{\C}{\mathbb{C}}
\newcommand{\N}{\mathbb{N}}
\newcommand{\R}{\mathbb{R}}
\newcommand{\Fp}{\mathrm{F}}
\newcommand{\Fq}{\overline{\mathrm{F}}}
\newcommand{\Mcal}   {{\mathcal M }} 
\newcommand{\ps}{p^\mathrm{bi}}
\newcommand{\wpi}{\widetilde{\pi}}
\newcommand{\poi}{\mathrm{Poi}}
\newcommand{\Ucal}{\mathrm{Unif}}
\title{Analysis of $d$-ary Tree Algorithms with Successive Interference Cancellation}
\author{\IEEEauthorblockN{Quirin Vogel\IEEEauthorrefmark{1}, Yash Deshpande\IEEEauthorrefmark{2}, \v Cedomir Stefanovi\' c\IEEEauthorrefmark{3}, Wolfgang Kellerer\IEEEauthorrefmark{2}}\\
\IEEEauthorblockA{
\IEEEauthorrefmark{1}Department of Mathematics, School of Computation, Information and Technology, Technical University of Munich, Germany\\
\IEEEauthorrefmark{2}Chair of Communication Networks, School of Computation, Information and Technology, Technical University of Munich, Germany \\
\IEEEauthorrefmark{3}Department of Electronic Systems, Aalborg University, Denmark \\
Email: \{quirin.vogel, yash.deshpande,wolfgang.kellerer\}@tum.de }, cs@es.aau.dk}
\begin{document}

\maketitle

\begin{abstract}
In this article, we calculate the mean throughput, number of collisions, successes, and idle slots for random tree algorithms with successive interference cancellation. Except for the case of the throughput for the binary tree, all the results are new. We furthermore disprove the claim that only the binary tree maximises throughput. Our method works with many observables and can be used as a blueprint for further analysis.
\end{abstract}

\begin{IEEEkeywords}
medium access algorithms, wireless communications, random access, 5G, Asymptotic Analysis, Functional Equation, Splitting algorithms.
\end{IEEEkeywords}

\acresetall
\section{Introduction}
\label{sec:introduction}
Tree algorithms address the classical \ac{RA} problem where $n$ users each want to transmit a packet to a common receiver. 
Users can only communicate with the receiver and not among themselves. 
Tree algorithms solve the problem by iteratively splitting users into different groups until each group has only one user. 
Each group then transmits in a time slot determined by the algorithm. 
A metric of a tree algorithm's efficiency is the ratio between $n$ and the number of time slots time it takes until all users have successfully transmitted their packet, called the throughput.
Yu and Giannakis introduced the \ac{SICTA} in~\cite{yu2007high}. 
SICTA extends previous tree algorithms and offers high throughput.
The key idea of SICTA is to, once they become decoded, successively remove user packets along the tree; this way, some of the previous groups may become reduced to having just a single user, propelling a new round of decoding and \ac{SIC}. 
In this work, we analyse the throughput of SICTA, as well as the mean number of collisions, successes, and idle slots, for the general version of the algorithm in which the users randomly split into $d$ groups, $d \geq 2$. 

The rest of the paper is organized as follows. In Section \ref{SubsectionMathIntro}, we give a brief overview of the novelties contained in our work. In Section \ref{sec:results}, we first give a brief mathematical description of the model, for readers unfamiliar with SICTA. We then state the main results. In Section \ref{sec:backgound} we provide background on tree-splitting algorithms and mention the related work (see Section \ref{subsectionliterature}). In Section \ref{sec:analysis}, we prove our results, i.e., we derive the correct expression for the steady state \ac{CRI} length conditioned on the number $\nuser$ of initially collided users for $\tf$-ary SICTA algorithm. We then give the asymptotic expressions for throughput, number of collision slots and number of immediately decodable slots (henceforth referred to successes) when the number of users $\nuser$ tends to infinity. We also derive results on the mean delay experienced by a user. 

\subsection{Overview of the our contributions}\label{SubsectionMathIntro}

Compared to other tree algorithms such as \ac{STA} and \ac{MTA}, studying the properties of SICTA requires a more careful approach. Indeed, \ac{SIC} (see Section \ref{sec:backgound}) introduces further dependencies into the model, which are non-trivial, especially in the case $\tf\ge 3$. 
These subtle dependencies caused errors in the literature \cite{yu2007high}, which were identified in \cite{deshpande2022correction}. 
However, \cite{deshpande2022correction} does not provide the formal analysis but provides simulation results indicating the value of the final result. 
In this work, by adding another coordinate (the split number $\scip\in\gk{1,\ldots,d}$), we are able to reformulate the model as a Markovian branching process and prove the correct results.

The analysis of $\tf$-ary tree algorithms is mainly\footnote{However, a number of graph theoretic approaches have been carried out, see \cite{evseev2007interrelation} and references therein.} done by combining generating functions with tools from complex analysis, see \cite{massey1981collision,mathys1985q,fayolle1986functional,molle1992computation,yu2007high}
 for example. With the Markovian structure at hand, we can use the aforementioned tools to derive closed-form expressions for many observables of the process. 
 Arguably the most important characteristic of a tree algorithm is the\ac{CRI} length, denoted by $\rk{l_n}_{n\ge 0}$, conditioned on the number of packets in the initial collision $n$. 
 We analyse the law of $l_n$ by deriving a functional equation, which the moment generating function for $l_n$ solves, see Proposition \ref{PropositionFunctionalRelation}. To obtain an explicit formula for mean $L_n=\E\ek{l_n}$, we differentiate the moment generating function and solve the ensuing functional equation. This method also works for the variance of $l_n$, as well as for the higher moments. We also give the functional relations for the moment generating functions of the number of collisions and successes occurring during SICTA and derive the explicit formulas for their means.  
 We stress that our method works for a large class of observables, although the solution of the functional equations must be checked on a case-by-case basis, see the proof of Corollary \ref{CorollaryLNbinomialFormula}.
 
 Using the explicit formula for $L_n$, we leverage asymptotic analysis to extract the leading term. Contrary to many tree models (see \cite{drmota2009random}), the mean CRI length $L_n$ does not converge when divided by $n$ but instead has small, non-vanishing fluctuations. Asymptotic analysis was done in detail for \ac{STA} in the case of equal splitting in \cite{mathys1985q}, and for $\tf=2$ and biased splitting in \cite{fayolle1986functional}.
To derive the leading term from the explicit formulas for the mean throughput, number of collisions and successes, we developed a more robust expansion that works for both fair and biased splitting and any $\tf\ge 2$. We achieve this by using some explicit identities derived from the binomial series together with the  geometric sum formula. This can then be combined with the use of the residue theorem as in \cite{mathys1985q}. 
 
 We then calculate the extremal points for the leading order of the observables. We verify the conjecture that the maximal throughput of $\log(2)$ can be achieved for any $\tf$-ary SICTA, given suitable splitting probabilities. This conjecture was formulated in \cite{deshpande2022correction}, based on numerical simulations.

We also numerically simulate the minimal collision rate subject to a throughput constraint. As the number of collisions corresponds to the number of signals stored in the access point, this result helps to gauge memory requirements. We show that a small reduction in throughput allows for a (relatively) large reduction in collisions. This is of interest when the arrival rate is not too close to the critical, stability threshold, as one is able to reduce collisions without affecting the mean throughput.

In the final section of the paper, we give a recursive relation which allows for the calculation of the moment generating function of $l_n$ up to arbitrary degree, see Proposition \ref{PropositionMGF}. We also solve a functional equation for the mean delay for SICTA in steady state, see Proposition \ref{ProposotionDelayG}.

\section{Results}\label{sec:results}
\subsection{A mathematical model for SICTA}\label{subsec:math introd}
In this section, we give an abridged description of SICTA from the mathematical point of view, in order to be able to state the main results rigorously. For readers familiar with the SICTA, this can be skipped on first reading. For a full description of the algorithm we refer the reader to Section \ref{sec:backgound}.

The underlying objects of our study are $\tf$-ary ($d\ge 2$) labelled trees with random, integer valued labels. The label of the root is a fixed, non-random number in $\N_0$. Nodes with label $m>1$ have children $(c_1,\ldots,c_d)$. The labels of the children are distributed according to the multinomial distribution $\mathrm{Mult}(m,p)$, where $p\in [0,1]^d$ is a vector of splitting probabilities. The nodes with label $m\in\gk{0,1}$ have no children. Labelling only depends on the parent node, and hence the resulting tree has a Markovian structure. For the simple tree algorithm, the \ac{CRI} length $l_n$ is given by the total number of nodes in the tree. However, for SICTA certain nodes are skipped: if the sum of the labels to the left of a node is larger or equal to the label of the parent minus 1, this node will not be counted; see Section \ref{sec:backgound} for more details and a justification of this. We also refer the reader to Figure \ref{fig:sta_example_full} for an example. A definition of $l_n$ for SICTA is as follows: given a fixed node with label $n\ge 2$, we denote the last non-skipped slot by $\scip$, which is defined as
\begin{equation}
    \scip=\inf\gk{k\in\gk{1,\ldots, d}\colon \sum_{j=1}^k  {I_j}\ge n-1}\, ,
\end{equation}
where $\gk{I_j}_{j\in\gk{1,\ldots,d}}$ are the labels of the children of the node. As $\gk{I_j}_{j\in\gk{1,\ldots,d}}$ are multinomial distributed, their sum equals $n$ and hence $\scip$ is well-defined.

The recursive definition of the $l_n$ is then given by
\begin{equation}\label{Equationlnfirst}
    l_n=\begin{cases}
        1&\text{ if }n=0,1\, , \\
        \1\gk{\scip<d}+\sum_{j=1}^\scip l_{I_j}&\text{ if }n\ge 2\, .
    \end{cases}
\end{equation}
We refer the reader to Equation \eqref{EquationDefinitionCollision} and Equation \eqref{EquationSnBinon} for the formulas for the collisions and successes.
\subsection{Main results}
For $k\in\gk{0,\ldots,d-1}$, we write $\Fq(k)=\sum_{j=k+1}^dp_k$. 
\begin{theorem}\label{THM1}
For any $\tf\ge 2$ and any probability vector $p\in [0,1]^d$, we have
\begin{enumerate}
    \item For $n\ge 1$, we have that
\begin{equation}\label{EquationExactLn}
    \E\ek{l_n}=L_n=1+\sum_{i=2}^n\binom{n}{i}\frac{(-1)^{i}(i-1)\sum_{k=0}^{d-2}\Fq(k)^i}{1-\sum_{j=1}^dp_j^i}\, .
\end{equation}
\item 
We furthermore have that as $n\to\infty$
\begin{equation}
    \frac{L_n}{n}=\frac{\sum_{k=0}^{d-2}\Fq(k)}{-\sum_{j=1}^dp_j\log p_j}+g_1(n)+o(1)\, ,
\end{equation}
where $g_1(n)$ is given in Equation \eqref{TheRealEquationForGN}. Furthermore, if Equation \eqref{SinePertEq} has no solution, then $g_1(n)=0$.
\item The first term on the right-hand side of the previous equation is minimized for $p=\ps\in \R^d$ with $p_j=2^{-\min\gk{j,d-1}}$, $j\in\gk{1,\ldots,d}$. For this $p$, one has that
\begin{equation}\label{EquationAsymLN}
    \frac{\sum_{k=0}^{d-2}\Fq(k)}{-\sum_{j=1}^dp_j\log p_j}=\frac{1}{\log(2)}\, .
\end{equation}
Furthermore for $\ps$, $g_1(n)$ is bounded between $10^{-3}$ and $10^{-6}$. 
\end{enumerate}
\end{theorem}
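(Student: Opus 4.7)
The plan is to handle the three assertions in sequence: the exact formula (Part 1) by taking expectations in the recursion for $l_n$ and solving the resulting linear difference equation by a binomial ansatz; the leading-order asymptotics together with the structure of $g_1(n)$ (Part 2) by applying the Nörlund--Rice integral to the exact formula and reading off the relevant residues; and the optimisation (Part 3) as an immediate consequence of Gibbs' inequality.

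For Part (1), take expectations in \eqref{Equationlnfirst}. Using $(I_1,\ldots,I_d)\sim\mathrm{Mult}(n,p)$ and the observation that $\{\scip<d\}$ is equivalent to $\{I_d\leq 1\}$ while $\{j\leq\scip\}$ is equivalent to $\{I_j+\cdots+I_d\geq 2\}$, I would first obtain a linear recursion of the form
\begin{equation*}
L_n \;-\; \sum_{j=1}^d \sum_{k=0}^n \binom{n}{k} p_j^{k}(1-p_j)^{n-k}\, L_k \;=\; R_n,
\end{equation*}
where the boundary term $R_n$ is an explicit polynomial in the tail sums $\Fq(k)$. Substituting the ansatz $L_n = 1 + \sum_{i=2}^n \binom{n}{i} a_i$ and using the convolution identity $\sum_k \binom{n}{k}p_j^{k}(1-p_j)^{n-k}\binom{k}{i} = \binom{n}{i}p_j^{i}$ diagonalises the recursion into one scalar equation per level $i$, namely $(1-\sum_j p_j^{i})\,a_i = c_i$. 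Matching the right-hand side pins down $a_i = (-1)^i(i-1)\sum_{k=0}^{d-2}\Fq(k)^i/(1-\sum_j p_j^{i})$, which is \eqref{EquationExactLn}. The bookkeeping point that needs care is checking that the indicator $\mathbf{1}\{\scip<d\}$, combined with the truncation of the sum at $\scip$, produces exactly the factor $(i-1)$ and the truncation $k\leq d-2$ in the closed form; this is where the SIC-specific skipping of the last slot enters.

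For Part (2), the exact formula writes $L_n - 1 = \sum_{i=2}^n \binom{n}{i}(-1)^i \phi(i)$ with $\phi(s) = (s-1)\sum_{k=0}^{d-2}\Fq(k)^s / (1-\sum_j p_j^{s})$, a textbook Nörlund--Rice alternating binomial sum. Applying the Rice integral representation with the classical kernel $B(n,s)=\Gamma(-s)\Gamma(n+1)/\Gamma(n+1-s)$, I would shift the contour to the left and collect residues. The zero of $1-\sum_j p_j^{s}$ at $s=1$ is simple, since $\sum_j p_j = 1$ and the derivative there equals $-\sum_j p_j\log p_j \neq 0$, and it is cancelled by the factor $(s-1)$, so $\phi$ is regular at $s=1$ with value $\phi(1) = \sum_k \Fq(k)/(-\sum_j p_j\log p_j)$; this combined with the residue $n$ of $B(n,s)$ at $s=1$ produces the claimed leading order $n\sum_k\Fq(k)/(-\sum_j p_j\log p_j)$. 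The fluctuation $g_1(n)$ collects the residues of $B(n,s)\phi(s)$ at the remaining zeros of $1-\sum_j p_j^{s}$ on the line $\Re s=1$, which exist precisely when the commensurability equation \eqref{SinePertEq} admits a nontrivial solution, and those residues assemble into a bounded, $\log n$-periodic oscillation. The main obstacle here is uniform control of the remainder integral on the shifted contour, which follows from the standard polynomial-decay bound on $|B(n,s)|$ for large $|\Im s|$ together with the uniform boundedness of $\phi$ away from its poles.

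For Part (3), swapping summations gives $\sum_{k=0}^{d-2}\Fq(k) = \sum_{j=1}^d \min(j,d-1)\, p_j$. The vector $\ps$ with $\ps_j = 2^{-\min(j,d-1)}$ is a genuine probability distribution, since $\sum_{j=1}^{d-1} 2^{-j} + 2^{-(d-1)} = 1$, so Gibbs' inequality (non-negativity of the Kullback--Leibler divergence) gives
\begin{equation*}
-\sum_{j=1}^d p_j\log p_j \;\leq\; -\sum_{j=1}^d p_j\log \ps_j \;=\; \log 2 \cdot \sum_{j=1}^d \min(j,d-1)\, p_j,
\end{equation*}
with equality iff $p = \ps$. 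Rearranging shows that the ratio $\sum_k\Fq(k)/(-\sum_j p_j\log p_j)$ is at least $1/\log 2$, with the bound attained at $\ps$, which proves \eqref{EquationAsymLN}. The numerical bounds on $g_1(n)$ at $p=\ps$ then follow by substituting $\ps$ into the explicit residue formula from Part (2) and evaluating the rapidly convergent series term by term. I expect the main conceptual hurdle to sit in Part (2), specifically the uniform contour estimates and the precise bookkeeping of the oscillatory residues; Parts (1) and (3) are essentially algebraic and information-theoretic respectively.
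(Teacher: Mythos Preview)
Your proposal is correct, and each part takes a genuinely different route from the paper.

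For Part~1 the paper works through the two-variable exponential generating function $\widetilde{Q}(x,z)=\sum_{n}\frac{x^n}{n!}\mathbb{E}[z^{l_n}]$, derives a functional equation for it, differentiates at $z=1$ to obtain a functional equation for the Poisson transform $L(x)$, and reads off the coefficients $\alpha_i$. Your direct-expectation approach bypasses the bivariate machinery: the observation that $\{\scip<j\}$ forces $I_j\in\{0,1\}$ and hence $l_{I_j}=1$ lets you strip the indicator at the cost of an explicit boundary term, and the convolution identity then diagonalises the recursion. Both arguments land on the same coefficient equation $(1-\sum_j p_j^i)a_i=(-1)^i(i-1)\sum_k\Fq(k)^i$; yours is shorter because it never needs the $z$-variable, while the paper's generating-function route has the advantage of simultaneously setting up the higher-moment and distributional computations used later (Propositions on $q_j(x)$ and the delay analysis).

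For Part~2 the paper first expands $(1-\sum_j p_j^i)^{-1}$ as a multinomial geometric series, applies the binomial identity to collapse the $i$-sum, approximates $(1-x)^n$ by $e^{-nx}$, and then Mellin-transforms the resulting harmonic sum to obtain the contour integral. You go straight to the N\"orlund--Rice representation of the alternating binomial sum $\sum_i\binom{n}{i}(-1)^i\phi(i)$, which reaches the same contour integral without the intermediate harmonic-sum detour. The pole structure and the residue at $s=1$ are identical in both pictures, and the oscillation $g_1(n)$ arises from the same commensurability condition; your route is more economical, the paper's has the minor benefit that the Mellin machinery is already primed for the parallel computations for $C_n$, $S_n$, $I_n$.

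For Part~3 your argument is strictly cleaner than the paper's. The paper sets up Lagrange multipliers, derives $p_{i+1}/p_i=\text{const}$ for $i<d-1$ and $p_{d-1}=p_d$, and then verifies the value $1/\log 2$ by explicit summation. Your rewriting $\sum_{k=0}^{d-2}\Fq(k)=\sum_j\min(j,d-1)\,p_j=-\frac{1}{\log 2}\sum_j p_j\log\ps_j$ turns the minimisation into a one-line application of Gibbs' inequality, with uniqueness of the minimiser for free from the equality case of KL divergence. This also makes transparent \emph{why} the optimiser has the dyadic form, something the Lagrange computation delivers only after the fact.
\textbf{}
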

The proof of the first statement is given in Corollary \ref{CorollaryLNbinomialFormula}, the one of the second statement in Proposition \ref{Proposition Asymptotic expansion} and the last statement in Lemma \ref{LemmaMinimalPoints}.

We summarize the results for other important observables of the SICTA process in Table \ref{tab:result_table}. In the interest of legibility, we have decided to postpone their definition until Section \ref{sec:analysis}. Proofs are very similar to the throughput, apart from the asymptotic relation for the number of successes, see Lemma \ref{LemmaSuccesses}.
\renewcommand{\arraystretch}{1.8}
\begin{table}[h]
    \centering
    \begin{tabular}{|c|c|c|c|c|}
    \hline
    Name & Symbol &Closed formula & Asymp. Leading Term & Evaluated at $\ps$\\
    \hline
    Throughput & $L_n$ & Equation \eqref{EquationExactLn} & $\frac{L_n}{n}\asymp\frac{\sum_{k=0}^{d-2}\Fq(k)}{-\sum_{j=1}^dp_j\log p_j}$& $\frac{1}{\log(2)}$\\
    \hline
    Collisions & $C_n$ & Equation \eqref{EquationClosedCollisions} & $\frac{C_n}{n}\asymp\frac{1-p_d}{-\sum_{j=1}p_j\log\rk{p_j}}$ & $\frac{1}{2\log(2)}$\\
    \hline
    Successes & $S_n$ & Equation \eqref{EquationSnBinon} & $\frac{S_n}{n}\asymp \frac{\sum_{k=2}^d p_k\log \Fq(k-1)}{\sum_{j=1}^d p_j\log p_j}$ & $\frac{1}{2}$\\
    \hline
    Idle slots & $I_n$ & Equation \eqref{ClosedEquaIn} & Equation \eqref{LeadingOrderIdle} & $\frac{1-\log(2)}{2\log(2)}$\\
    \hline
    \end{tabular}
    \caption{Summarizing the results for different observables of SICTA.}
    \label{tab:result_table}
\end{table}

Furthermore we obtain a number of results regarding the mean delay of SICTA in steady state. We state them in Section \ref{subsectionDelayAnalysis}, as they require a more technical background.

\section{Background}
\label{sec:backgound}

\subsection{Tree Algorithms}

The first tree-algorithm was introduced by Capetanakis in~\cite{capetanakis1979tree}.
It is also known as the Capetanakis-Tsybakov-Mikhailov type \ac{CRP}. 
The protocol addresses the classical \ac{RA} problem where several users must transmit packets to an \ac{AP} over a time-slotted shared multiple access channel with broadcast feedback. The most basic form of the algorithm is \ac{STA} which proceeds as follows. 
Assume that $\nuser$ packets are transmitted by $\nuser$ different users in a given slot, then: 
\begin{itemize}
    \item If $\nuser = 0$, then the slot is idle. 
    \item If $\nuser = 1$, then there is only one packet in the slot (also called a singleton), and the packet can be successfully decoded. 
    \item If $\nuser > 1$, the signals of $\nuser$ different transmissions interfere with each other, and no packet can be decoded. 
    This scenario is called a collision. 
    The users must re-transmit their packets according to the \ac{CRP}.
\end{itemize}


\textit{Collision Resolution Protocol} : At the end of every slot, the \ac{AP} broadcasts the outcome of the slot, i.e., idle (0), success (1), or collision (e) (where (e) stands for error), to all the users in the network. 
If the feedback is a collision, the $\nuser$ users independently split into $\tf$ groups. 
The probability that a user joins group $j$ is $p_j$ where $j \in \{1,...,\tf\}$, $\tf\ge 2$ and $p_j \in \ek{0,1}$. 
In the next slot, all the users who chose the first group ($j=1$) re-transmit their packets. If this results in a collision once again, then the process continues recursively. 
Users who have chosen $j>1$-st group observe the feedback. They wait until all users in $j-1$-st group successfully transmit their packets to the \ac{AP}. 
One can represent the progression of a \ac{CRI} in terms of $\tf$-ary trees as shown in Figure \ref{fig:sta_example_full}. Here, we show an example with the initial number of collided users, $\nuser=4$ and $\tf=3$. 
Each node on the tree represents a slot. The number inside the node shows the number of users that transmit in a given slot. 
The slot number is shown outside the node. 
After a collision node, the first group branches to the left of the tree, the second group branches in the middle, and the third group branches to the right. 
The number of slots needed from the first collision till the \ac{CRP} is complete is known as the \ac{CRI}. 


\begin{figure}
    \centering
    \includegraphics[width=0.35\textwidth]{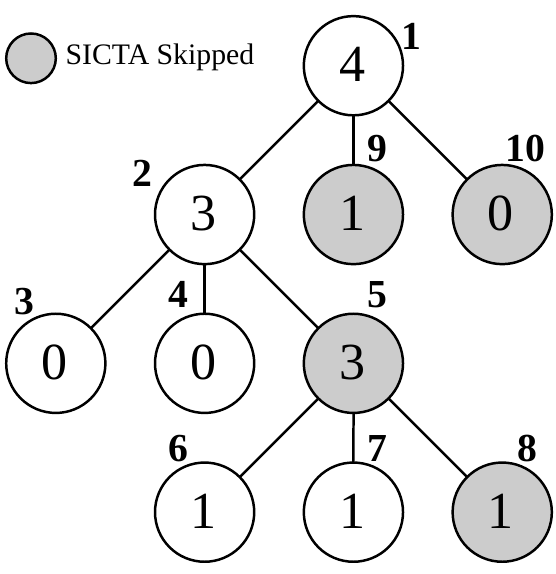}
    \caption{The tree illustration of the ternary ($\tf=3$) tree algorithm. The number inside the nodes in the tree represents the number of users transmitting in that slot. The number outside the node represents the slot number. Slots 5,8,9 and 10 will be skipped in the SICTA. }
    \label{fig:sta_example_full}
\end{figure}

The main performance parameter for tree algorithms is conditional throughput, defined as the ratio of users $\nuser$ and the total number of slots in a \ac{CRI} $L_n$. 
In the example from Figure \ref{fig:sta_example_full}, it is $0.4$ packets/slot. 
Furthermore, the asymptotic throughput (as $\nuser \rightarrow \infty$) is important to know the algorithm's \ac{MST}. The \ac{MST} gives the stability of the \ac{RA} scheme for a certain arrival rate of users, $\lambda\in \mathbb{R}^{+}$. 
For example, the stability of the \textit{gated} \ac{RA} scheme, \footnote{The stability is related to the MST for windowed access and free access with a different equation.} it is given by
\begin{equation}
    \text{MST} = \frac{1}{\lim_{\nuser \rightarrow \infty} \ecri_n / n} \geq \lambda\, .  
    \label{eq:proxy_mst}
\end{equation}
We will analyse the case of SICTA with a Poisson arrival rate of new packets in Section \ref{subsectionDelayAnalysis}.

\subsection{Successive Interference Cancellation}
\label{subsec:background_sic}

In \ac{STA} 
collision signals are discarded at the receiver. 
A new method where the receiver saves collision signals and tries to resolve more packets per slot was introduced in \cite{yu2007high}.
Here the receiver removes the signals of successful packets from the collision signals.
This process is known as \ac{SIC}. 
Let $Y_{s}$ be the signal of slot number $s$ and $X_{i}$ be the signal of the packet of user $i$. 
In the example from Figure \ref{fig:sta_example_full}, the receiver will save $Y_{1}$ and $Y_{2}$. 
In an interference-limited channel i.e. the one for which the noise can be effectively neglected, the received signal is the sum of all packets transmitted in that slot, $Y_{1} = X_{1} + X_{2} + X_{3} + X_{4}$ and $Y_{2} = X_{1} + X_{2} + X_{3}$. 
We keep the same slot indices as \ac{STA} for the legibility from the diagram. Since all the users are treated the same, we assume that the first user to be resolved is user 1, then user 2, and so on until user 4.  
In slot 6, the receiver gets $Y_{6} = X_{1}$. 
Since there is no interference in this slot, the receiver can decode the packet. 
It then is able to remove $X_{1}$ from $Y_{1}$ and $Y_{2}$.
Similarly, after slot 7 the receiver can remove $X_{2}$ from $Y_{1}$ and $Y_{2}$.
After removing $X_{1}$ and $X_{2}$, only $X_{3}$ remains in $Y_{2}$.
Thus the packet from user 3 can be decoded. 
The receiver can then proceed to remove $X_4$ from $Y_{1}$ and decode $X_{2}$ without the need for user 4 to have transmitted a packet after the first slot.
In this manner, the receiver can decode 2 packets after slot 7, resulting in a shorter \ac{CRI}.
One can easily see from the diagram that if all the signals from a particular node in the tree are removed, then all the remaining children of that node can be skipped. 
Another advantage of SICTA is the knowledge that the right-most branch of the tree can always be skipped. 
If the algorithm reaches the right-most branch of a node and still has not decoded all the signals from that node, this right-most branch will be a definite collision and can hence be skipped. 

The asymptotic throughput of SICTA was (incorrectly) shown to be $\frac{\ln \tf}{\tf-1}$ in \cite{yu2007high} achieved for fair-splitting.
Thus, \ac{SICTA} with fair splitting was thought to be the only configuration that achieves the optimal asymptotic throughput of $\ln{2}$ packets/slot.
However, a premise in their analysis for $\tf > 2$ was shown to be wrong in \cite{deshpande2022correction}.
In \cite{yu2007high}, it was assumed that only the right-most branch can be skipped. 
It fails to consider a scenario for $\tf > 2$, where more than one child node can be skipped when all the signals in the parent node are resolved.  
In the example from \ref{fig:sta_example_full}, \cite{yu2007high} failed to consider that slot 9 would be skipped after all the signals in $Y_{1}$ are decoded after slot 7. 
The correction paper \cite{deshpande2022correction}, did not provide the formal analysis but merely pointed out the mistake from \cite{yu2007high}. 
However, it did provide simulation results indicating that a special biased distribution of splitting probabilities, where
\begin{equation}
    p_{j} = \begin{cases}
    0.5^{j} & \; j \in \{1,.., \tf-1 \} \ \\
    0.5^{d-1} & \; j = d
    \end{cases}
\end{equation}
achieved an \ac{MST} of $\log(2)\approx0.693$ packets/slot for all values of $\tf$. 
In this work, we formally prove this indication to be correct. 
\subsection{Related work}\label{subsectionliterature}
As mentioned before, tree algorithms were introduced by \cite{capetanakis1979tree}.
A number of analytical results are due to Flajolet and Mathys, see \cite{mathys1984analysis,mathys1985q,fayolle1986functional}. Delay analysis was done in \cite{molle1992computation}. Yu and Giannakis introduced \ac{SICTA} in \cite{yu2007high}. There have been several publications regarding \ac{SICTA}, for example in \cite{andreev2011practical,peeters2015} variants of SICTA are considered and in \cite{stefanovic2020tree,stefanovic2021tree} the case where $K>1$ packets can be decoded in each step (multi-packet reception) is examined. The case of windowed and free access was studied in \cite{peeters2009}. Analysis of the depth of the resulting tree was carried out in \cite{janson1997analysis,holmgren2012novel}. Recently, large deviation analysis was applied to random access algorithms, see \cite{konig2022multi,konig2023throughput}. In these articles, the authors estimate the probability of rare events, such as large throughput deviations from their expected mean.


\section{Analysis}
\label{sec:analysis}
\subsection{Derivation of the functional equations}
Given a vector of probabilities $p=\rk{p_1,\ldots,p_d}$, at each collision each user independently chooses a slot $j\in\gk{1,\ldots,d}$ with probability $p_j$. Let $I_j$ denote the number of users who have chosen the $j$-th slot.

For a collision of $n$ packets, we recall that the last non-skipped slot $\scip$ is for SICTA defined as
\begin{equation}
    \scip=\inf\gk{k\in\gk{1,\ldots, d}\colon \sum_{j=1}^k  {I_j}\ge n-1}\, .
\end{equation}
The evolution of the \ac{CRI} length $l_n$ of $n$ collided users is then given by
\begin{equation}\label{Equationln}
    l_n=\begin{cases}
        1&\text{ if }n=0,1\, , \\
        \1\gk{\scip<d}+\sum_{j=1}^\scip l_{I_j}&\text{ if }n\ge 2\, .
    \end{cases}
\end{equation}
Since the remaining slots $\gk{{I_{\scip+1}},\ldots,{I_d}}$ hold at most one packet, they can be decoded from the original signal minus the decoded signals, see also \cite{deshpande2022correction}. However, the last slot can always be skipped, as it is the difference between the initial signal and the signals to the left.

Our first result is a functional equation for the moment generating function for $l_n$:
\begin{proposition}\label{PropositionFunctionalRelation}
Define for $x,z\in \C$
\begin{equation}
    \tmgfl(x,z)=\sum_{n\ge 0}\frac{x^n}{n!}\E\ek{z^{l_n}}\, .
\end{equation}
It then holds that
\begin{equation}\label{EquationRelQ}
    \tmgfl(x,z)=\prod_{i=1}^d \tmgfl(xp_i,z)+\sum_{k=0}^{d-2}(z-z^2)\rk{1+\Fq(k)x}\prod_{i=1}^k \tmgfl(xp_i,z)\, ,
\end{equation}
where $\Fq(k)=\sum_{j=k+1}^d p_k$.
\end{proposition}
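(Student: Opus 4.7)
\emph{Step 1 (recursion for $M_n$).} My plan is to first establish a pointwise recursion for $M_n(z):=\E\ek{z^{l_n}}$ and then pass to the exponential generating function $\tmgfl(x,z)$. For $n\ge 2$, I condition Equation~\eqref{Equationln} on the children $(I_1,\ldots,I_d)\sim\mathrm{Mult}(n,p)$. Since $\scip$ is a deterministic function of $I$ and the $l_{I_j}$ are conditionally independent given $I$, this gives $M_n(z)=\E\ek{z^{\1\gk{\scip<d}}\prod_{j=1}^{\scip}M_{I_j}(z)}$. The key structural observation is that on $\gk{\scip=k}$ each $I_j$ with $j>k$ lies in $\gk{0,1}$ (because $I_{k+1}+\ldots+I_d\le 1$), hence $M_{I_j}(z)=z$. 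This lets me replace the truncated product by the full one via $\prod_{j=1}^d M_{I_j}(z)\1\gk{\scip=k}=z^{d-k}\prod_{j=1}^k M_{I_j}(z)\1\gk{\scip=k}$; summing over $k$ identifies $\E\ek{\prod_j M_{I_j}(z)}$ with $\sum_{k=1}^d z^{d-k}\E\ek{\prod_{j=1}^k M_{I_j}(z)\1\gk{\scip=k}}$, and splitting $M_n(z)$ analogously and subtracting yields, for $n\ge 2$,
\begin{equation*}
  M_n(z)-\E\ek{\prod_{j=1}^d M_{I_j}(z)}=\sum_{k=1}^{d-1}(z-z^{d-k})\,\E\ek{\prod_{j=1}^k M_{I_j}(z)\1\gk{\scip=k}}.
\end{equation*}
For $n\in\gk{0,1}$ the same difference equals $z-z^d$ directly from $M_n(z)=z$ and $\E\ek{\prod_j M_{I_j}(z)}=z^d$.

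\emph{Step 2 (EGF factorization).} The standard multinomial EGF identity gives $\sum_n\frac{x^n}{n!}\E\ek{\prod_j M_{I_j}(z)}=\prod_{j=1}^d\tmgfl(xp_j,z)$. To handle the indicators I use $\1\gk{\scip\le k}=\1\gk{I_{k+1}+\ldots+I_d\le 1}$, which depends only on the last $d-k$ coordinates; together with the product structure of the multinomial EGF this yields
\begin{equation*}
  \sum_n\frac{x^n}{n!}\E\ek{\prod_{j=1}^k M_{I_j}(z)\1\gk{\scip\le k}}=(1+\Fq(k)x)\prod_{j=1}^k\tmgfl(xp_j,z)=:h_k(x,z),
\end{equation*}
the factor $1+\Fq(k)x$ arising as the EGF of the multi-indices $(i_{k+1},\ldots,i_d)$ of total at most $1$. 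An analogous computation in which $I_k$ is also constrained (and contributes $M_{I_k}(z)=z$) gives $\sum_n\frac{x^n}{n!}\E\ek{\prod_{j=1}^k M_{I_j}(z)\1\gk{\scip\le k-1}}=z\,h_{k-1}(x,z)$ for $k\ge 2$.

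\emph{Step 3 (telescoping and assembly).} Using $\1\gk{\scip=k}=\1\gk{\scip\le k}-\1\gk{\scip\le k-1}$ for $k\ge 2$ and $\1\gk{\scip=1}=\1\gk{\scip\le 1}$, define $g_k(x,z):=\sum_n\frac{x^n}{n!}\E\ek{\prod_{j=1}^k M_{I_j}(z)\1\gk{\scip=k}}$, so that Step~2 yields $g_1=h_1$ and $g_k=h_k-zh_{k-1}$ for $k\ge 2$. Substituting these into the EGF of the Step-1 recursion over $n\ge 2$ and reindexing, the identity $(z-z^{d-k})-(z^2-z^{d-k})=z-z^2$ telescopes the sum to $(z-z^2)\sum_{k=1}^{d-2}h_k$. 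Combining with the direct boundary contribution $(z-z^d)(1+x)$ from $n\in\gk{0,1}$ and subtracting the $n\le 1$ piece of $(z-z^{d-1})g_1$, which equals $(z^2-z^d)(1+x)$, produces exactly the missing $k=0$ summand $(z-z^2)h_0=(z-z^2)(1+x)$. The total is $\tmgfl(x,z)-\prod_{j=1}^d\tmgfl(xp_j,z)=(z-z^2)\sum_{k=0}^{d-2}h_k$, which is Equation~\eqref{EquationRelQ}.

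The hardest part will be the bookkeeping at the boundaries $n\in\gk{0,1}$ and $k=1$: the Step-1 recursion applies only for $n\ge 2$, and the decomposition of $\1\gk{\scip=k}$ in Step~3 has no natural analogue at $k=1$ since $\scip\ge 1$ always. These apparent discrepancies interact nontrivially, but a short algebraic check shows that the two boundary corrections conspire to produce exactly the $k=0$ summand of the final sum.
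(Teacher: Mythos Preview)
Your proof is correct and proceeds along a genuinely different route from the paper's. The paper argues by fixing $\scip=k$ and further splitting into the two sub-events $\sum_{j\le k}I_j=n$ and $\sum_{j\le k}I_j=n-1$; each of these is expanded via explicit multinomial sums over $\Pfrak_n\hk{k}$, with the constraint $\1\gk{\mu_k>1}$ (resp.\ $\1\gk{\mu_k>0}$) decomposed as $1-\1\gk{\mu_k=0}-\1\gk{\mu_k=1}$, and the resulting pieces are passed separately to the EGF level and summed over $k$. Your key structural insight---that $M_{I_j}(z)=z$ for every $j>\scip$, so the truncated product $\prod_{j\le\scip}M_{I_j}(z)$ differs from the full product $\prod_{j=1}^d M_{I_j}(z)$ only by a power of $z$---lets you bypass this case distinction entirely and work instead with the cumulative events $\gk{\scip\le k}=\gk{\sum_{j>k}I_j\le 1}$, which depend only on the last $d-k$ coordinates and therefore factorize cleanly in the multinomial EGF. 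The telescoping in Step~3 then replaces the paper's term-by-term cancellation (where the $k=d-1$ contribution drops out). Your argument is shorter and more conceptual; the paper's approach, while more laborious, tracks the microscopic events more explicitly and is perhaps more directly transplanted to the companion identities for collisions and successes.
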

In the literature, see for example \cite{yu2007high}, researchers work with $\mgfl(x,z)=\ex^{-x}\tmgfl(x,z)$. As it simplifies the notation, we work with $\tmgfl(x,z)$ for now and use $\mgfl(x,z)$ in the later parts of the article. From the proposition above, one can obtain closed formulas for the mean, variance and higher order terms. We apply this for the mean.
\begin{corollary}\label{CorollaryLNbinomialFormula}
We have that for all $n\ge 0$
\begin{equation}
        L_n=\E\ek{l_n}=1+\sum_{i=2}^n\binom{n}{i}\frac{(-1)^i(i-1)\sum_{k=0}^{d-2}\Fq(k)^i}{1-\sum_{j=1}^dp_j^i}\, .
\end{equation}
\end{corollary}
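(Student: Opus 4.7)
The plan is to reduce the two-variable functional equation \eqref{EquationRelQ} to a univariate one for the exponential generating function of the means $L_n$, solve the latter coefficient by coefficient, and then undo the standard Poisson transform. Because $\E\ek{1^{l_n}}=1$, I already have $\tmgfl(x,1)=e^x$, so the natural move is to differentiate \eqref{EquationRelQ} in $z$ at $z=1$ and work with $\mathcal{L}(x):=\partial_z\tmgfl(x,z)\big|_{z=1}=\sum_{n\ge 0}\tfrac{x^n}{n!}L_n$.

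Differentiating the right-hand side of \eqref{EquationRelQ}, the derivative of the factor $(z-z^2)$ contributes $-1$ at $z=1$, while the other term produced by the product rule on the inhomogeneous summand carries a factor $(z-z^2)\big|_{z=1}=0$ and so drops out. Using $\sum_{j\ne i}p_j=1-p_i$ and $\sum_{j=1}^k p_j=1-\Fq(k)$, this gives
\[
\mathcal{L}(x) \;=\; \sum_{i=1}^d e^{x(1-p_i)}\,\mathcal{L}(xp_i) \;-\; \sum_{k=0}^{d-2}(1+\Fq(k)x)\,e^{x(1-\Fq(k))}.
\]
I then introduce the Poisson transform $M(x):=e^{-x}\mathcal{L}(x)=\sum_{n\ge 0}\tfrac{x^n}{n!}m_n$. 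Because $e^{-x}e^{x(1-p_i)}\mathcal{L}(xp_i)=e^{-xp_i}\mathcal{L}(xp_i)=M(xp_i)$, every exponential collapses and the relation reduces to
\[
M(x) \;=\; \sum_{i=1}^d M(xp_i) \;-\; \sum_{k=0}^{d-2}(1+\Fq(k)x)\,e^{-x\Fq(k)}.
\]

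To extract the coefficients I expand $(1+ax)e^{-ax}$ directly: the coefficient of $x^n/n!$ equals $1$ when $n=0$ and $(-1)^n(1-n)a^n$ when $n\ge 1$, vanishing at $n=1$. Matching $x^n/n!$ on both sides of the equation for $M$ gives $m_0=1$ (consistent with $L_0=1$), no constraint on $m_1$ beyond what the boundary condition $L_1=1$ already imposes via $m_1=L_1-L_0=0$, and, for $n\ge 2$,
\[
m_n\bigl(1-\sum_{i=1}^d p_i^n\bigr) \;=\; (-1)^n(n-1)\sum_{k=0}^{d-2}\Fq(k)^n,
\]
whose denominator is strictly positive for $n\ge 2$ unless $p$ concentrates on a single coordinate (a degenerate case one can exclude). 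Inverting yields the announced closed form for $m_n$, and then, since $\mathcal{L}(x)=e^xM(x)$, the inverse Poisson transform $L_n=\sum_{k=0}^n\binom{n}{k}m_k$ together with $m_0=1,\ m_1=0$ and the formula for $m_k$ ($k\ge 2$) delivers exactly the identity claimed in the corollary.

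The main technical obstacle is the sign and index bookkeeping: the $-1$ coming from $\partial_z(z-z^2)|_{z=1}$ must combine correctly with the factor $(1-n)$ from the expansion of $(1+\Fq(k)x)e^{-x\Fq(k)}$ to produce the $(-1)^n(n-1)$ in the final formula, and one must verify the cancellation that kills the $n=1$ coefficient of the inhomogeneity — otherwise the functional equation at order $n=1$ would be inconsistent, since there $1-\sum_i p_i=0$.
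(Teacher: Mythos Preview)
Your proposal is correct and follows essentially the same route as the paper: differentiate the functional equation for $\tmgfl$ at $z=1$, multiply by $e^{-x}$ to obtain the Poisson-transformed relation, solve for the coefficients, and invert via $L_n=\sum_{k}\binom{n}{k}m_k$. The only cosmetic difference is that the paper expands the Poisson transform as an ordinary power series $L(x)=\sum_n\alpha_n x^n$ (so their $\alpha_n$ is your $m_n/n!$), but the argument and all the key checks --- the $-1$ from $\partial_z(z-z^2)|_{z=1}$, the vanishing of the $n=1$ inhomogeneity, and the initial values $m_0=1$, $m_1=0$ --- are identical.
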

\begin{proof}
Note that by definition $\tmgfl(x,1)=\ex^x$. Set $K(x)= \frac{\d \tmgfl}{\d z}(x,1)$. By Equation \eqref{EquationRelQ}
\begin{equation}\label{EquationKx}
    K(x)=\sum_{i=1}^d K(p_ix)\ex^{(1-p_i)x}-\sum_{k=0}^{d-2}\rk{1+\Fq(k)x}\ex^{x\sum_{j=1}^kp_k}
\end{equation}
Set $\mgfl(x,z)=\ex^{-x}\tmgfl(x,z)$. Define the Poisson generating function $L(x)$ of $L_n$ as 
 \begin{equation}\label{EquationFormofLn}
     L(x)=\ex^{-x}\sum_{n\ge 0}\frac{x^n}{n!}L_n=\frac{\d Q}{\d z}(x,1)=\ex^{-x}K(x)\, .
 \end{equation}
Equation \eqref{EquationKx} now yields 
\begin{equation}
     L(x)=\sum_{i=1}^d L(p_ix)-\sum_{k=0}^{d-2}\rk{1+\Fq(k)x}\ex^{-\Fq(k)x}\, .
 \end{equation}
 If we use the expansion $L(x)=\sum_{n\ge 0}\alpha_n x^n$, this gives by comparing coefficients that for $n \ge 2$
 \begin{equation}
    \alpha_n=\alpha_n\sum_{i=1}^dp_i^n+\frac{1}{n!}{(-1)^n(n-1)\sum_{k=0}^{d-2}\Fq(k)^n}\, .
\end{equation}
Hence
 \begin{equation}\label{Equationalphan}
    \alpha_n=\frac{1}{n!}\frac{(-1)^n(n-1)\sum_{k=0}^{d-2}\Fq(k)^n}{1-\sum_{i=1}^dp_i^n}\, .
\end{equation}
Noting that by Equation \eqref{EquationFormofLn}
\begin{equation}
    L_n=\sum_{i=0}^n\frac{n!}{(n-i)!}\alpha_i\, .
\end{equation}
By Equation \eqref{Equationln}, we have $\alpha_0=1-\alpha_1=1$. Hence, we obtain
\begin{equation}\label{EquationLnBinom}
    \E\ek{l_n}=L_n=1+\sum_{i=2}^n\binom{n}{i}\frac{(-1)^i(i-1)\sum_{k=0}^{d-2}\Fq(k)^i}{1-\sum_{j=1}^dp_j^i}\, .
\end{equation}
Note that for $d=2$, we obtain the same result as \cite[Equation 30]{yu2007high}, as $\Fq(0)=1$. 
By taking higher-order derivatives (with respect to $z$) in Equation \eqref{EquationRelQ}, one can obtain closed formulas for the variance of $l_n$ as well as higher moments. We leave that to the reader. The proof of this corollary establishes the first claim in Theorem \ref{THM1}.
\end{proof}
We now proceed to the proof of Proposition \ref{PropositionFunctionalRelation}.

\textbf{Proof of Proposition \ref{PropositionFunctionalRelation}:}
The moment generating function $Q_n(z)$ is defined as
\begin{equation}
    Q_n(z)=\E\ek{z^{l_n}}=\sum_{k=1}^d\E\ek{z^{l_n}, \scip=k}\, ,\qquad\textnormal{for }z\in \C\, .
\end{equation}
Note that for $k<d$, we can split
\begin{equation}
    \E\ek{z^{l_n}, \scip=k}=\E\ek{z^{l_n}, \scip=k,\sum_{j=1}^\scip {I_j}=n-1}+\E\ek{z^{l_n}, \scip=k,\sum_{j=1}^\scip {I_j}=n}\, ,
\end{equation}
while for $k=d$, we have $\gk{\scip=k}=\gk{\scip=k,\sum_{j=1}^\scip {I_j}=n}$. In order to facilitate the analysis, we set
\begin{equation}
    \Pfrak_n\hk{d}=\gk{\mu\in \N^d_0\colon \sum_{k=1}^d\mu_k=n}\, ,\quad\text{and}\quad \binom{n}{\mu}=\binom{n}{\mu_1,\ldots,\mu_d}=\frac{n!}{\mu_1!\cdots\mu_d!}\, .
\end{equation}
Given probabilities $p=\rk{p_1,\ldots,p_d}$, we also introduce
\begin{equation}\label{Eqplambda}
    p(\mu)=\prod_{i=1}^d p_i^{\mu_i}\, ,\quad\text{for }\mu\in\N^d\, .
\end{equation}
For $k\le d$, we expand using Equation \eqref{Equationln}
\begin{equation}\label{equation7}
    \E\ek{z^{l_n}, \scip=k,\sum_{j=1}^\scip {I_j}=n}=\sum_{\mu\in\Pfrak_n\hk{k}}\binom{n}{\mu}p(\mu)\1\gk{\mu_k>1}z^{\1\gk{k<d}}\prod_{j=1}^kQ_{\mu_j}(z)\, .
\end{equation}
If $\scip=k$ and $\sum_{j=1}^k {I_j}=n$, then ${I_k}$ cannot be zero or one, because otherwise $\scip$ would have been bounded by $k-1$.

Note that $\1\gk{\mu_k>1}$ can be written as $1-\1\gk{\mu_k=0}-\1\gk{\mu_k=1}$ and that furthermore 
\begin{equation}
    \gk{\mu\in \Pfrak_{n}\hk{k}\colon \mu_k=0}\quad\text{is isomorphic to}\quad \Pfrak_{n}\hk{k-1}\, ,
\end{equation}
and similarly one has that $\gk{\mu\in \Pfrak_{n}\hk{k}\colon \mu_k=1}$ is isomorphic to $\Pfrak_{n-1}\hk{k-1}$. With this in mind, we rewrite the above equation as
\begin{multline}\label{FirstEqQn}
    \sum_{\mu\in\Pfrak_n\hk{k}}\binom{n}{\mu}p(\mu)\1\gk{\mu_k>1}z^{\1\gk{k<d}}\prod_{j=1}^kQ_{\mu_j}(z)=\sum_{\mu\in\Pfrak_n\hk{k}}\binom{n}{\mu}p(\mu)z^{\1\gk{k<d}}\prod_{j=1}^kQ_{\mu_j}(z)\\
    -\sum_{\mu\in\Pfrak_n\hk{k-1}}\binom{n}{\mu}p(\mu)z^{1+\1\gk{k<d}}\prod_{j=1}^{k-1}Q_{\mu_j}(z)-p_k\sum_{\mu\in\Pfrak_{n-1}\hk{k-1}}\binom{n}{\mu}p(\mu)z^{1+\1\gk{k<d}}\prod_{j=1}^{k-1}Q_{\mu_j}(z)\, .
\end{multline}
Note that the cases $\gk{\mu_k=0}$ and $\gk{\mu_k=1}$ give us an extra factor of $z$.

We now consider the case where $\sum_{j=1}^\scip {I_j}=n-1$, which implies $k<d$. Write $\Fp(k)=\sum_{j=1}^k p_k$ and $\Fq(k)=1-\Fp(k)$ for the cumulative distribution function induced by $p$. We obtain 
\begin{equation}\label{eq:first exps}
    \E\ek{z^{l_n}, \scip=k,\sum_{j=1}^\scip {I_j}=n-1}=nz\Fq(k)\sum_{\mu\in\Pfrak_{n-1}\hk{k}}\binom{n-1}{\mu}p(\mu)\1\gk{\mu_k>0}\prod_{j=1}^kQ_{\mu_j}(z)\, .
\end{equation}
Indeed, if $\sum_{j=1}^\scip {I_j}=n-1$, we have $n$ choices to select one packet and place it on the slots $\gk{\scip+1,\ldots, d}$. Summing over all possible slots gives us a probability of $p_{k+1}+\ldots+p_d=\Fq(k)$. We have $n-1$ packages left to distribute amongst the $k$ slots, which gives the multinomial coefficient.

In a similar fashion as before, we expand Equation \eqref{eq:first exps}
\begin{multline}\label{SecondEqQn}
   nz\Fq(k)\sum_{\mu\in\Pfrak_{n-1}\hk{k}}\binom{n-1}{\mu}p(\mu)\1\gk{\mu_k>0}\prod_{j=1}^kQ_{\mu_j}(z)\\=nz\Fq(k)\rk{\sum_{\mu\in\Pfrak_{n-1}\hk{k}}\binom{n-1}{\mu}p(\mu)\prod_{j=1}^kQ_{\mu_j}(z)
    -z\!\!\!\sum_{\mu\in\Pfrak_{n-1}\hk{k-1}}\!\binom{n-1}{\mu}p(\mu)\prod_{j=1}^{k-1}Q_{\mu_j}(z)} .
\end{multline}
From Equations \eqref{FirstEqQn} and \eqref{SecondEqQn} one could derive a recursive formula such as in \cite{massey1981collision}. 
However, it is not of much use to our analysis. 

On account of Equation \eqref{Equationln}, we have that
\begin{equation}
    \tmgfl(x,z)=(1+x)z+\sum_{k=1}^d \sum_{n\ge 2}\frac{x^n}{n!}E\ek{z^{l_n}, \scip=k}\, .
\end{equation}
We first examine the case $\scip=1$, as it is a bit different from the rest. We have that for $n\ge 2$
\begin{equation}\label{ZeroEqQnMisN}
    \E\ek{z^{l_n}, \scip=1}=p_1^nzQ_n(z)+np_1^{n-1}\Fq(1)zQ_{n-1}(z)\, ,
\end{equation}
as for $\scip=1$ either $n$ or $n-1$ packets must have picked the first slot. The first case has a probability of $p_1^n$ and the second of $np_1^{n-1}(1-p_1)$.

Recall that $p_1+\Fq(1)=1$. We hence get that
\begin{multline}
    \sum_{n\ge 2}\frac{x^n}{n!}\rk{p_1^nQ_n(z)+np_1^{n-1}\Fq(1)Q_{n-1}(z)}={\tmgfl(p_1x,z)-z-zp_1x+x\Fq(1)\ek{\tmgfl(p_1x,z)-z}}
   \\=\tmgfl(p_1x,z)\rk{1+\Fq(1)x}-z(1+x)\, ,
\end{multline}
by reindexing: consider the first summand in the above equation. By employing an index shift, we obtain
\begin{equation}
     \sum_{n\ge 2}\frac{x^n}{n!}p_1^nQ_n(z)=\rk{ \sum_{n\ge 0}\frac{(xp_1)^n}{n!}p_1^nQ_n(z)}-z-zp_1x\, .
\end{equation}
Fix now $k\ge 2$ and consider the case $\scip=k$. We begin with the case $\sum_{j=1}^k {I_j}=n$, which yields for $k\le d$
\begin{equation}
    \sum_{n\ge 2}\frac{x^n}{n!}E\ek{z^{l_n}, \scip=k,\sum_{j=1}^k {I_j}=n}=\sum_{n\ge 0}\frac{x^n}{n!}E\ek{z^{l_n}, \scip=k,\sum_{j=1}^k {I_j}=n}\, ,
\end{equation}
as for $\scip\ge 2$, one needs to have $n\ge 2$. By Equation \eqref{equation7} and Equation \eqref{FirstEqQn}
\begin{equation}\label{FirstEqQnMisN}
    \sum_{n\ge 2}\frac{x^n}{n!}E\ek{z^{l_n}, \scip=k,\sum_{j=1}^k {I_j}=n}=z^{\1\gk{k<d}}\prod_{j=1}^k\tmgfl(p_jx,z)-z^{1+\1\gk{k<d}}(1+p_kx)\prod_{j=1}^{k-1}\tmgfl(p_jx,z)\, ,
\end{equation}
which we illustrate with the last term in Equation \eqref{FirstEqQn}: recall that for non-negative sequences $\gk{a_n\hk{i}}_{i,n\ge 0}$, one has that
\begin{equation}\label{IllustrationEq}
    \prod_{i=1}^k\rk{\sum_{n\ge 0}a_n\hk{i}}=\sum_{n\ge 0}\sum_{\mu\in\Pfrak_n\hk{k}}\prod_{i=1}^k a_{\mu_i}\hk{i}\, .
\end{equation}
Hence, employing an index shift, we obtain
\begin{equation}
\begin{split}
    \sum_{n\ge 0}\frac{x^n}{n!}\sum_{\mu\in\Pfrak_{n-1}\hk{k-1}}\binom{n}{\mu}p(\mu)z^{1+\1\gk{k<d}}&\prod_{j=1}^{k-1}Q_{\mu_j}(z)\\=& xz^{1+\1\gk{k<d}}\sum_{n\ge 1}\sum_{\mu\in\Pfrak_{n-1}\hk{k-1}}\prod_{j=1}^{k-1}\frac{Q_{\mu_j}(z)(p_jx)^{\mu_j}}{\mu_j!}\\=&xz^{1+\1\gk{k<d}}\prod_{j=1}^{k-1}\rk{\sum_{n\ge 0}\frac{(p_jx)^n}{n!}Q_n(z)}\\
    =&xz^{1+\1\gk{k<d}}\prod_{j=1}^{k-1}Q(p_jx,z)\, .
    \end{split}
\end{equation}
The other terms in Equation \eqref{FirstEqQnMisN} follow similarly. Summing the right-hand side of Equation \eqref{FirstEqQnMisN} from $k=2$ to $d$ gives
\begin{equation}\label{EquationFirstOnesAdded}
    \prod_{j=1}^d\tmgfl(p_jx,z)-z(1+p_kx)\prod_{j=1}^{d-1}\tmgfl(p_jx,z)+\sum_{k=2}^{d-1} z\prod_{j=1}^k\tmgfl(p_jx,z)-z^{2}(1+p_kx)\prod_{j=1}^{k-1}\tmgfl(p_jx,z)\, .
\end{equation}

Using Equation \eqref{SecondEqQn}, the case $k<d$ and $\sum_{j=1}^k {I_j}=n-1$ gives in a similar fashion
\begin{equation}\label{SecondEqQnMisN}
    \sum_{n\ge 2}\frac{x^n}{n!}E\ek{z^{l_n}, \scip=k,\sum_{j=1}^k {I_j}=n-1}=xz\Fq(k)\prod_{j=1}^k\tmgfl(p_jx,z)-xz^2\Fq(k)\prod_{j=1}^{k-1}\tmgfl(p_jx,z)\, .
\end{equation}
Summing the right-hand side of the above over all $k\in\gk{2,\ldots,d-1}$ gives
\begin{equation}\label{EquationAddingSecond}
    \sum_{k=2}^{d-1}z\Fq(k)\prod_{j=1}^k\tmgfl(p_jx,z)-xz^2\Fq(k)\prod_{j=1}^{k-1}\tmgfl(p_jx,z)\, .
\end{equation}
When adding Equations \eqref{ZeroEqQnMisN}, Equation \eqref{EquationFirstOnesAdded} and Equation \eqref{EquationAddingSecond}, we notice that the $d-1$ term cancels. Hence, we obtain the functional relation
 \begin{equation}
     \tmgfl(x,z)=\prod_{i=1}^d \tmgfl(xp_i,z)+\sum_{k=0}^{d-2}(z-z^2)\rk{1+\Fq(k)x}\prod_{i=1}^k \tmgfl(xp_i,z)\, .
 \end{equation}
This concludes the proof of Proposition \ref{PropositionFunctionalRelation}.

One can also look at the number of collisions $c_n$, which follow the recursive equations
\begin{equation}\label{EquationDefinitionCollision}
    c_n=\begin{cases}
        0&\text{ if }n\in\gk{0,1}\, ,\\
        \1\gk{M<d}+\sum_{j=1}^M c_{I_j}&\text{ if }n\ge 2\, .
    \end{cases}
\end{equation}
In this case, following similar steps as for the throughput, we obtain
\begin{equation}\label{EquationClosedCollisions}
    C_n=\sum_{i=2}^n\binom{n}{i}\frac{(-1)^i\rk{i-1}\rk{1-p_d^i}}{1-\sum_{j=1}^d p_j^i}\, .
\end{equation}
The result relies on the functional relation for the exponential moment generating function
\begin{equation}
    \widetilde{R}(x,z)=\sum_{n\ge 0}\frac{x^n}{n!}\E\ek{z^{c_n}}\, ,
\end{equation}
given by
\begin{equation}
    \widetilde{R}(x,z)=(1+x)(1-z)+(z-1)\rk{xp_d+1}\prod_{i=1}^{d-1}\widetilde{R}(p_ix,z)+\prod_{j=1}^d \widetilde{R}(p_jx,z)\, .
\end{equation}

We also give the formula for $S_n$, the expected number of successes
\begin{equation}\label{EquationSnBinon}
    S_n=1+\sum_{i=2}^n\binom{n}{i}\frac{(-1)^{i-1}i\rk{1-\sum_{k=1}^{d}p_{k}\Fq(k-1)^{i-1}}}{{1-\sum_{j=1}^dp_j^i}}\, ,
\end{equation}
based on
\begin{equation}
    s_n=\begin{cases}
        0&\text{ if }n=0\, ,\\
        1&\text{ if }n=1\, ,\\
        \sum_{j=1}^Ms_{I_j}&\text{ if }n\ge 2\, .
    \end{cases}
\end{equation}
Its exponential moment generating function $ \widetilde{S}(x,z)$ satisfies
\begin{equation}
     \widetilde{S}(x,z)=(1-p_1)x(z-1)+\prod_{i=1}^{d}\widetilde{S}(xp_i,z)+\sum_{k=1}^{d-1} xp_{k+1}\rk{1-z}\prod_{i=1}^{k}\widetilde{S}(xp_i,z)\, .
\end{equation}
Finally, the number of idle slots 
\begin{equation}
    i_n=\begin{cases}
    1&\text{ if }n=0\, ,\\
    0&\text{ if }n=1\, ,\\
    \sum_{j=1}^Mi_{I_j}&\text{ if }n\ge 2\, ,
    \end{cases}
\end{equation}
satisfies $i_n=l_n-c_n-s_n$ and hence satisfies
\begin{equation}\label{ClosedEquaIn}
    I_n=\sum_{i=2}^n\binom{n}{i}\frac{(-1)^{i}(i-1)\rk{\sum_{k=1}^{d-2}\Fq(k)^i+p_d^i+\frac{i}{i-1}\sum_{k=1}^d p_k\Fq(k-1)^{i-1}}}{\rk{1-\sum_{j=1}^dp_j^i}}\, .
\end{equation}
Note that the above procedure can be carried out for any observable as long as it is additive as we move down the tree. This is a common occurrence in the analysis of random trees, see \cite{drmota2009random}. A further example of such an additive observable would be the number of nodes with degree $R$ or larger than $R$. This might be of interest in practice as an interference of many signals could be difficult to control in terms of noise.
 \subsection{Asymptotic Analysis}
 \label{SubsectionAsymptoticAnalysis}
We extend the methods from \cite{mathys1985q} to allow for asymptotic analysis both in the equal-split case as well as in the biased case. The first key identity for our method is
\begin{equation}\label{EquationGeometric}
    \frac{1}{1-\sum_{j=1}^d x_j}=\sum_{m\ge 0}\sum_{\mu\in\Pfrak_m^d}\binom{m}{\mu}\prod_{j=1}^d x_j^{\mu_j}\, ,
\end{equation}
which follows from the geometric sum and expanding $\rk{\sum_{j=1}^d x_j}^m$. The other identity is 
\begin{equation}\label{EquationBinom}
    \sum_{i=1}^n \binom{n}{i}(-1)^n(i-1)x^i=\sum_{i=2}^n \binom{n}{i}(-1)^n(i-1)x^i=1-(1-x)^{n-1}\rk{1+(n-1)x}\, ,
\end{equation}
which follows from the binomial theorem through differentiation, as
\begin{equation}
\rk{1+x}^{n-1}nx=\sum_{i=1}^n\binom{n}{i}ix^i    \qquad\text{and}\qquad\sum_{i=1}^n\binom{n}{i}x^i=\rk{1+x}^n-1\, .
\end{equation}
We now state the main result of this section.
\begin{proposition}\label{Proposition Asymptotic expansion}
\begin{enumerate}
    \item If the equation
\begin{equation}\label{SinePertEq}
    p_1^{1/k_1}=\ldots=p_d^{1/k_d}\, ,
\end{equation}
has no positive integer solution, then 
\begin{equation}
    \frac{L_n}{n}=\frac{\sum_{k=0}^{d-2}\Fq(k)}{-\sum_{j=1}^d p_j\log p_j}+o\rk{1}\, .
\end{equation}
\item If Equation \eqref{SinePertEq} does have a positive integer solution, then
\begin{equation}\label{EquationAsympFormLm}
    \frac{L_n}{n}=\frac{\sum_{k=0}^{d-2}\Fq(k)}{-\sum_{j=1}^d p_j\log p_j}+g_1(n)+o\rk{1}\, ,
\end{equation}
where $g_1(n)$ is given in Equation \eqref{TheRealEquationForGN}.
\end{enumerate}
\end{proposition}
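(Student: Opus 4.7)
The plan is to extract the asymptotics of $L_n$ from the closed-form expression in Corollary \ref{CorollaryLNbinomialFormula} via a Rice-type contour-integral argument in the spirit of \cite{mathys1985q}, refined by identities \eqref{EquationGeometric} and \eqref{EquationBinom} so that the argument applies uniformly in the splitting probability vector $p$.

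First, I would recognize the alternating binomial sum in Equation \eqref{EquationExactLn} as a Rice sum in the index $i$. Setting $\phi(s) = (s-1)\sum_{k=0}^{d-2}\Fq(k)^s / (1-\sum_{j=1}^d p_j^s)$, the function $\phi$ extends meromorphically to $\mathbb{C}$ with poles exactly at the non-$1$ zeros of $D(s) := 1-\sum_j p_j^s$ (the $(s-1)$ factor cancels the simple zero of $D$ at $s=1$). Taylor expansion of numerator and denominator at $s=1$ yields $\phi(1) = \sum_k \Fq(k) / (-\sum_j p_j \log p_j)$, which is precisely the leading coefficient claimed in Equation \eqref{EquationAsympFormLm}. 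The Rice integral representation then writes $L_n - 1 = \sum_{i=2}^n \binom{n}{i}(-1)^i\phi(i)$ as a contour integral whose integrand has simple poles at positive integers (reconstructing the sum) and at the non-$1$ zeros of $D$ (from $\phi$).

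Second, I would deform the contour leftward across the critical line $\mathrm{Re}(s) = 1$. The residue contribution associated with $s=1$, computed with care for the cancellation of the $(s-1)$ numerator factor against the Rice-kernel pole, yields exactly $n\phi(1) = n \sum_k \Fq(k) / (-\sum_j p_j \log p_j)$, the desired leading term. The remaining zeros of $D$ on $\mathrm{Re}(s) = 1$ satisfy $\sum_j p_j^{1+i\tau} = 1$ with $\tau \neq 0$; equality in the triangle inequality forces $\tau \log p_j \in 2\pi\mathbb{Z}$ for every $j$ with $p_j > 0$, which is equivalent to the solvability of Equation \eqref{SinePertEq}. If \eqref{SinePertEq} has no positive integer solution, $s=1$ is the only critical-line pole and the remaining contributions are $o(n)$, establishing part (i). If \eqref{SinePertEq} does have a solution, summing the residues at the resulting countable family of critical-line poles $s = 1 + i\tau_\ell$ produces bounded oscillations of order $n$; after division by $n$ these sum to the $\log n$-periodic function $g_1(n)$ defined in Equation \eqref{TheRealEquationForGN}, establishing part (ii).

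Third, the remainder contour integral must be bounded to $o(n)$. This requires polynomial-growth bounds on $\phi$ along vertical strips avoiding its poles, which in turn require control of $|D(s)|^{-1}$ away from its zeros. The series representation \eqref{EquationGeometric} is the input: the identity $1/D(s) = \sum_{m\ge 0}\sum_{\mu\in\Pfrak_m^d}\binom{m}{\mu}\prod_j p_j^{s\mu_j}$, valid on $\mathrm{Re}(s) > 1$, provides together with its meromorphic continuation the desired term-by-term growth bounds. Identity \eqref{EquationBinom}, applied formally to the inner sum in $i$ after swapping with the geometric expansion, provides an independent verification of the residue constant at $s=1$ matching the Rice computation. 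The main obstacle will be the uniform control of $|D(s)|^{-1}$ along the shifted contour in the biased case (where the $p_j$ are not all equal): the critical-line zeros of $D$ form an intricate pattern controlled by the arithmetic of the $\log p_j$, and uniform distance-to-zero bounds must be established to prevent small-denominator issues. This is where the present argument refines the analysis of \cite{mathys1985q}, which treats only the fair-splitting case $p_j = 1/d$ in which the critical-line zeros form a single arithmetic progression and the analogous bounds are immediate.
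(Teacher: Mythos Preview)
Your plan is sound but follows a different path from the paper. You propose a direct N\"orlund--Rice integral on the closed form \eqref{EquationExactLn}, treating \eqref{EquationGeometric} and \eqref{EquationBinom} as auxiliary tools for bounds and cross-checks. The paper instead uses these two identities as the \emph{main} computational device: it first expands $\bigl(1-\sum_j p_j^i\bigr)^{-1}$ via \eqref{EquationGeometric}, then closes the resulting inner sum over $i$ exactly with \eqref{EquationBinom}, and after the approximation $(1-x)^n\approx e^{-xn}$ arrives at a harmonic sum $\sum_{m,\mu}\binom{m}{\mu}f\bigl(\alpha n\, p(\mu)\bigr)$ with $f(x)=1-e^{-x}(1+x)$. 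Taking the Mellin transform $\mathcal{M}[f;s]=-(s+1)\Gamma(s)$ and resumming the geometric series inside the inverse Mellin integral yields $\int n^{-s}(s+1)\Gamma(s)\big/\bigl(1-\sum_j p_j^{-s}\bigr)\,ds$; the residue at $s=-1$ gives the leading term, and the further poles on $\Re(s)=-1$ (present exactly when \eqref{SinePertEq} is solvable) give $g_1(n)$. Up to the substitution $s\mapsto -s$ your Rice integrand and the paper's Mellin integrand coincide, so the pole analysis is the same; the trade-off is that the paper must justify the exponential approximation (deferred to \cite{mathys1985q}) while avoiding any delicate kernel analysis at the point where your $\phi$ has a removable singularity, whereas your route is more direct but places the technical burden on controlling the Rice kernel and $|D(s)|^{-1}$ along the shifted contour. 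Both approaches ultimately defer the fine structure of the zero set of $1-\sum_j p_j^{\pm s}$ in the biased case to \cite{mathys1985q}.
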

Note that the above proposition establishes the second claim in the proof of Theorem \ref{THM1}.
\begin{proof}
Recall that due to Equation \eqref{EquationLnBinom}
\begin{equation}
    {L_n}=1+ \sum_{k=0}^{d-2}\sum_{i=2}^n\binom{n}{i}\frac{(-1)^i(i-1)\Fq(k)^i}{1-\sum_{j=1}^dp_j^i}\, .
\end{equation}
As the expression on the right-hand side is additive in $k$, it suffices to calculate the asymptotic behavior for $k\in \gk{0,\ldots, d-2}$ fixed. Hence our goal is to calculate the asymptotic value of
\begin{equation}\label{Equation21523}
    \frac{1}{n}\sum_{i=2}^n\binom{n}{i}\frac{(-1)^i(i-1)\alpha^i}{1-\sum_{j=1}^dp_j^i}\qquad\text{for}\qquad\alpha=\Fq(k)\, .
\end{equation}
We first apply Equation \eqref{EquationGeometric} to rewrite the above as
\begin{equation}
    \sum_{i=2}^n\binom{n}{i}\frac{(-1)^i(i-1)\alpha^i}{1-\sum_{j=1}^dp_j^i}=\sum_{m\ge 0}\sum_{\mu\in\Pfrak_m^d}\binom{m}{\mu}\sum_{i=2}^n\binom{n}{i}(-1)^i(i-1)\alpha^ip(\mu)^i\, ,
\end{equation}
where $p(\mu)$ was defined in Equation \eqref{Eqplambda}. Now we can apply Equation \eqref{EquationBinom} to eliminate the sum over $i$:
\begin{equation}
   L_n\sim  \sum_{m\ge 0}\sum_{\mu\in\Pfrak_m^d}\binom{m}{\mu}\rk{1-\rk{1-\alpha p(\mu)}^{n-1}\ek{1+(n-1)p(\mu)\alpha}}\, .
\end{equation}
In order to increase legibility, we will switch from $n-1$ to $n$. We use the approximation $\rk{1-x}^n=\ex^{-xn+\Ocal\rk{x^2n}}$ which yields
\begin{equation}
   {L_{n+1}}\sim\sum_{m\ge 0}\sum_{\mu\in\Pfrak_m^d}\binom{m}{\mu}\rk{1-\ex^{-\alpha n p(\mu)}\ek{1+\alpha np(\mu)}}\, .
\end{equation}
See \cite{mathys1985q} for the argument why the $\ex^{\Ocal\rk{x^2n}}$ term is negligible. We now write the above as
\begin{equation}
    {L_n}\sim\sum_{m\ge 0}\sum_{\mu\in\Pfrak_m^d}\binom{m}{\mu}f(\alpha n p(\mu))\quad\text{where}\quad f(x)=1-\ex^{-x}(1+x)\, .
\end{equation}

Recall that for a function $f\colon \C\to\C$, its \textit{Mellin} transform is given by 
\begin{equation}\label{EquationMellin}
    \Mcal\ek{f;s}=\int_0^\infty x^{s-1}f(x)\d x\quad\text{ and its inverse }\quad f(x)=\frac{1}{2\pi i}\int_{c-i\infty}^{c+i\infty}x^{-s}\Mcal\ek{f;s}(s)\d s\, ,
\end{equation}
for suitable $c\in\R$, see \cite{FLAJOLET19953} for a reference. For $f(x)=1-\ex^{-x}(1+x)$, one has that $f(x)=\Ocal\rk{x^2}$, as $x\to 0$. One furthermore has that $f'(x)=x\ex^{-x}$. Hence, using integration by parts in Equation \eqref{EquationMellin}, we obtain
\begin{equation}
    \Mcal\ek{f;s}=-\frac{1}{s}\int_0^\infty x^{s}f'(x)\d x=-\frac{1}{s}\int_0^\infty x^{s+1}\ex^{-x}\d x=-\frac{\Gamma(s+2)}{s} =-(s+1)\Gamma(s)\, ,
\end{equation}
as long as $0>\Re(s)>-2$. Here $\Gamma$ denotes the (complex) gamma function which satisfies $\Gamma(s+1)=s\Gamma(s)$. Using the above, we get that
\begin{multline}
    L_n\sim \frac{-1}{2\pi i}\sum_{m\ge 0}\sum_{\mu\in\Pfrak_m^d}\binom{m}{\mu}\int_{3/2-i\infty}^{3/2+i\infty }{p(\mu)^{-s}\alpha^{-s}n^{-s}(s+1)\Gamma(s)}\d s\\
    =\frac{-1}{2\pi i}\int_{3/2-i\infty}^{3/2+i\infty }\frac{\alpha^{-s}n^{-s}(s+1)\Gamma(s)}{1-\sum_{j=1}^d p_j^{-s}}\d s\, ,
\end{multline}
where we used (see Equation \eqref{IllustrationEq})
\begin{equation}
    \sum_{m\ge 0}\sum_{\mu\in\Pfrak_m^d}\binom{m}{\mu}p(\mu)^{-s}= \sum_{m\ge 0}\sum_{\mu\in\Pfrak_m^d}\binom{m}{\mu}\prod_{i=1}^d\rk{p_i^{-s}}^{\mu_i}=\frac{1}{1-\sum_{j=1}^d p_j^{-s}}\, ,
\end{equation}
in the last step. Note that at $s=-1$, the integrand has a pole of first order. We expand
\begin{equation}
    1-\sum_{j=1}^d p_j^{-s}=1-\sum_{j=1}^d p_j \ex^{-(s+1)\log p_j}\sim 1-\sum_{j=1}^d p_j\rk{1-(s+1)\log p_j}=(s+1)\sum_{j=1}^d p_j\log p_j\, ,
\end{equation}
as $s\to -1$. Hence, the residue of the integrand is given by
\begin{equation}
    \mathrm{Res}\rk{\frac{\alpha^{-s}n^{-s}(s+1)\Gamma(s)}{1-\sum_{j=1}^d p_j^{-s}};s=-1}=n\frac{\alpha}{-\sum_{j=1}^d p_j\log p_j}\, ,
\end{equation}
where we used $(s+1)\Gamma(s)\sim -1$ as $s\to -1$. If the equation
\begin{equation}
    p_1^{1/k_1}=\ldots=p_d^{1/k_d}\, ,
\end{equation}
has no positive integer solution, $s=-1$ is the only pole (see \cite[Equation 3.67]{mathys1985q}) and hence the residue theorem gives
\begin{equation}
    \frac{-1}{2\pi i}\int_{3/2-i\infty}^{3/2+i\infty }\frac{\alpha^{-s}n^{-s}(s+1)\Gamma(s)}{1-\sum_{j=1}^d p_j^{-s}}\d s=n\frac{\alpha}{-\sum_{j=1}^d p_j\log p_j}\, ,
\end{equation}
see \cite{mathys1985q} for a description of the contour and further details.

If there are positive integer solutions to Equation \eqref{SinePertEq}, then there are infinitely many poles, this was shown in \cite[Equation 3.67]{mathys1985q}. We abbreviate $c=p_1^{1/k_1}$. The set of poles other than $s=-1$ is given by the set $S$, where
\begin{equation}
    S=\gk{(x,y)\text{ with }x\le -1, y\in\R\setminus\gk{0}\colon \sum_{j=1}^d p_j^{-x}\ex^{iy\log p_j}=1}\, ,
\end{equation}
see \cite[Equation 3.66]{mathys1985q}. We then have that
\begin{equation}
    \frac{-1}{2\pi i}\int_{3/2-i\infty}^{3/2+i\infty }\frac{\alpha^{-s}n^{-s}(s+1)\Gamma(s)}{1-\sum_{j=1}^d p_j^{-s}}\d s=n\frac{\alpha}{-\sum_{j=1}^d p_j\log p_j}+nf_1(n,\alpha)\, ,
\end{equation}
where the reside theorem again gives that
\begin{equation}
    f_1(n,\alpha)=\sum_{(x,y)\in S}\frac{\alpha^{x-iy}n^{x-iy}\Gamma\rk{x-iy}\rk{x-iy+1}}{-\sum_{j=1}^d p_j\log p_j}\, .
\end{equation}
For a derivation and the convergence of the sum, we refer the reader to \cite{mathys1985q}. 

By substituting the $\alpha=\Fq(k)$ and taking the sum over $k$ in Equation \eqref{Equation21523}, we hence get that
\begin{equation}
    \frac{L_n}{n}=\frac{\sum_{k=0}^{d-2}\Fq(k)}{-\sum_{j=1}^d p_j\log p_j}+g_1(n)+o\rk{1}\, ,
\end{equation}
with
\begin{equation}\label{TheRealEquationForGN}
    g_1(n)=\sum_{k=0}^{d-2}f_1\rk{n,\Fq(k)}
\end{equation}
For the bounds on $g_1(n)$, we refer the reader to \cite[Table 1]{mathys1985q}.
\end{proof}
Similarly, one finds that
\begin{equation}\label{EquationAsymptCN}
    \frac{C_n}{n}= \frac{1-p_d}{-\sum_{j=1}p_j\log\rk{p_j}}+g_2(n)+o\rk{1}\, ,
\end{equation}
where $g_2(n)=f_1\rk{n,1-p_d}$.
 The obtain the asymptotic number of successes, more work is needed. We sketch the main steps and leave the rest to the reader. 
 \begin{lemma}\label{LemmaSuccesses}
 For $d\ge 2$ 
 \begin{equation}\label{EquationNumberOfSuccesses}
     \frac{S_n}{n}=  \frac{\sum_{k=2}^d p_k\log \Fq(k-1)}{\sum_{j=1}^d p_j\log p_j}+g_3(n)+o\rk{1}\quad\text{as}\quad n\to \infty\, ,
 \end{equation}
 where $g_3$ is given in Equation \eqref{Equation for g4}.
 \end{lemma}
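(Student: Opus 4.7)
The plan is to mirror the Mellin-transform approach used in the proof of Proposition \ref{Proposition Asymptotic expansion}, adapted to the closed form \eqref{EquationSnBinon} for $S_n$. First, using $\sum_{k=1}^d p_k = 1$ and $\Fq(0)=1$, I would rewrite the bracket as
\begin{equation*}
1 - \sum_{k=1}^d p_k \Fq(k-1)^{i-1} = \sum_{k=2}^d p_k \rk{1 - \Fq(k-1)^{i-1}},
\end{equation*}
so that the analysis decouples into $d-1$ contributions indexed by $k$. For each $k$, I would substitute the geometric identity \eqref{EquationGeometric} for $1/(1-\sum_j p_j^i)$, interchange the order of summation, and then apply the binomial identity
\begin{equation*}
\sum_{i=2}^n \binom{n}{i}(-1)^{i-1}\,i\,x^i = nx\ek{(1-x)^{n-1} - 1},
\end{equation*}
which is the natural sibling of \eqref{EquationBinom} and follows by differentiating $(1-x)^n$.

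The subtle step occurs when this identity is applied separately to the two pieces arising from $p(\mu)^i$ and from $\Fq(k-1)^{i-1}p(\mu)^i=\Fq(k-1)^{-1}(\Fq(k-1)p(\mu))^i$: the two constant $-1$ terms carry the same prefactor $np(\mu)$ and therefore \emph{cancel}. This cancellation is essential, since the outer sum of just $\sum_{m,\mu}\binom{m}{\mu}np(\mu)$ would diverge. What remains is the telescoping quantity
\begin{equation*}
np(\mu)\ek{(1-p(\mu))^{n-1} - (1-\Fq(k-1)p(\mu))^{n-1}},
\end{equation*}
which, after the approximation $(1-x)^{n-1}\sim \ex^{-nx}$ used in Proposition \ref{Proposition Asymptotic expansion}, leads to
\begin{equation*}
S_n - 1 \sim \sum_{k=2}^d p_k\sum_{m\ge 0}\sum_{\mu\in\Pfrak_m^d}\binom{m}{\mu}F_k(np(\mu)),\qquad F_k(y)=y\rk{\ex^{-y}-\ex^{-\Fq(k-1)y}}.
\end{equation*}
A direct integration then gives $\Mcal\ek{F_k;s}=\Gamma(s+1)\ek{1-\Fq(k-1)^{-s-1}}$.

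After Mellin inversion and swapping the contour integral with the $(m,\mu)$-sum (justified exactly as in the proof of Proposition \ref{Proposition Asymptotic expansion}), the problem reduces to evaluating
\begin{equation*}
\frac{1}{2\pi i}\int_{3/2-i\infty}^{3/2+i\infty}\frac{n^{-s}\Gamma(s+1)\ek{1-\Fq(k-1)^{-s-1}}}{1-\sum_{j=1}^d p_j^{-s}}\,\d s
\end{equation*}
by pushing the contour to the left and collecting residues. The main obstacle lies precisely in the pole structure at $s=-1$: the factor $\Gamma(s+1)$ has a simple pole with residue $1$, while $1-\Fq(k-1)^{-s-1}$ has a simple zero with derivative $\log\Fq(k-1)$, so their product has a \emph{removable} singularity at $s=-1$ with value $\log\Fq(k-1)$. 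Combined with the simple zero of $1-\sum_j p_j^{-s}$ there (derivative $\sum_j p_j\log p_j$, as in Proposition \ref{Proposition Asymptotic expansion}), the integrand has a simple pole at $s=-1$ with residue $n\log\Fq(k-1)/\sum_j p_j\log p_j$. Summing over $k$ with weight $p_k$ yields the claimed leading term, and the remaining poles (coming from solutions of \eqref{SinePertEq}) assemble into $g_3(n)$ in direct analogy with Proposition \ref{Proposition Asymptotic expansion}.
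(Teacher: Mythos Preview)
Your proposal is correct and follows essentially the same route as the paper's proof: both reduce to the Mellin analysis of the function $y\bigl(\ex^{-y}-\ex^{-\Fq(k-1)y}\bigr)$, identify the removable singularity of its Mellin transform at $s=-1$ with value $\log\Fq(k-1)$, and collect the resulting simple residue against the zero of $1-\sum_j p_j^{-s}$. The only cosmetic difference is that you keep the $k$-contributions separate as individual $F_k$ throughout, whereas the paper bundles them into a single $f_2(x)=\sum_{k} p_k F_k(x)$ before taking the Mellin transform.
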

 \begin{proof}
 Starting from Equation \eqref{EquationSnBinon}, we express
 \begin{equation}
     p_{k}\Fq(k-1)^{i-1}=\frac{p_k}{\Fq(k)}\Fq(k-1)^i=\frac{p_k}{\Fq(k)}q_k^i\, ,
 \end{equation}
 where we write $\Fq(k-1)=q_k$ to keep the ensuing formulas shorter.
 
 Using the expansion as in the proof of Proposition \ref{Proposition Asymptotic expansion}, we get that
 \begin{equation}
     S_n\sim \sum_{m\ge 0}\sum_{\mu\in\Pfrak_m^d}\binom{m}{\mu}f_2( n p(\mu))\quad\text{where}\quad f_2(x)=\sum_{k=1}^d p_kx\rk{\ex^{-x}-\ex^{-xq_k} }\, .
\end{equation}
Note that
\begin{equation}
    f_2(x)\sim x^2\sum_{k=2}^d p_k \Fp(k-1)\quad\text{as}\quad x\to 0\, .
\end{equation}
Here, recall that $\Fp(i)=\sum_{j=1}^i p_j$. The above expansion gives that for $\Re(s)>-2$, the Mellin transform is well defined
\begin{equation}
    \Mcal\ek{f_2;s}=\Gamma(s+1)\sum_{k=2}^d p_k\rk{1-q_k^{-1-s}}\, .
\end{equation}
Furthermore, $\Mcal\ek{f_2;s}$ has a removable singularity at $s=-1$:
\begin{equation}
    \Mcal\ek{f_2;s}\sim -\sum_{k=2}^d p_k\log \Fq(k-1)\quad\text{as }s\to -1\, ,
\end{equation}
where we used that $\Gamma(s)\sim s^{-1}$ as $s\to 0$ as well as $\Fq(k-1)=q_k$. Hence, using Equation \eqref{EquationMellin}, we get that
\begin{equation}
    \sum_{m\ge 0}\sum_{\mu\in\Pfrak_m^d}\binom{m}{\mu}f_2( n p(\mu))=\frac{1}{2\pi i}\int_{-3/2-i\infty}^{-3/2+i\infty}\frac{n^{-s}\Gamma(s+1)\sum_{k=2}^d p_k\rk{1-q_k^{-1-s}}}{1-\sum_{j=1}^d p_j^{-s}}\, .
\end{equation}
From this, one obtains using the residue theorem as in the proof of Proposition \ref{Proposition Asymptotic expansion}
\begin{equation}
    \frac{S_n}{n}= \frac{\sum_{k=2}^d p_k\log \Fq(k-1)}{\sum_{j=1}^d p_j\log p_j}+g_3(n)+o\rk{1}\, ,
\end{equation}
where 
\begin{equation}\label{Equation for g4}
    g_3(n)=\sum_{y\in S}\frac{\Mcal\ek{f_2; -1+iy}}{-\sum_{j=1}^d p_j\log p_j}\, ,
\end{equation}
unless Equation \eqref{SinePertEq} has no positive integer solution, in which case $g_3$ is equal to zero.
 \end{proof}
We can use the results for $L_n,C_n,S_n$ to obtain that
\begin{equation}\label{LeadingOrderIdle}
    \frac{I_n}{n}=\frac{\sum_{k=1}^{d-2}\Fq(k)+p_d+\sum_{k=2}^d p_k\log \Fq(k-1)}{-\sum_{j=1}^d p_j\log p_j}+g_4(n)+o\rk{1}\, ,
\end{equation}
where $g_4(n)=g_1(n)-g_2(n)-g_3(n)$, see Equation \eqref{TheRealEquationForGN}, Equation \eqref{Equation for g4} and after Equation \eqref{EquationAsymptCN}.
 \subsection{Minimization}
 \label{SubsectionMinimization}
 In this section, we calculate the values of $p$ which maximize throughput and success-rate, and minimize collisions and skipped slots. 
 
 Recall that $\Fq(k)=1-\sum_{j=1}^k p_j$. To achieve the maximum throughput, we want to minimize the main term in Equation \eqref{EquationAsympFormLm}, i.e.,
 \begin{equation}
     \frac{\sum_{k=0}^{d-2}\Fq(k)}{-\sum_{j=1}^d p_j\log p_j}\, .
 \end{equation}
 We do this in the next lemma:
 \begin{lemma}\label{LemmaMinimalPoints}
 For $\ps=\ps_d$, given by $\ps_i=2^{-\min\{i,d-1\}}$,
 the term
 \begin{equation}
      \frac{\sum_{k=0}^{d-2}\Fq(k)}{-\sum_{j=1}^d p_j\log p_j}\, ,
 \end{equation}
 is minimized. Furthermore, at $\ps$, we have that
 \begin{equation}
      \frac{\sum_{k=0}^{d-2}\Fq(k)}{-\sum_{j=1}^d p_j\log p_j}\Big|_{p=\ps}=\frac{1}{\log(2)}\, .
 \end{equation}
 No other minima exist besides $\ps$.
 \end{lemma}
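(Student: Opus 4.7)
The plan is to reduce the optimization to Gibbs' inequality (non-negativity of KL divergence). First, I would swap the order of summation in the numerator to obtain
\begin{equation*}
\sum_{k=0}^{d-2}\Fq(k) \;=\; \sum_{k=0}^{d-2}\sum_{j=k+1}^d p_j \;=\; \sum_{j=1}^d \min\gk{j,d-1}\, p_j .
\end{equation*}
The denominator is Shannon entropy $H(p) = -\sum_j p_j \log p_j$ (in nats), so the claim reduces to showing
\[
\sum_{j=1}^d \min\gk{j,d-1}\, p_j \;\ge\; \frac{H(p)}{\log 2},
\]
for every probability vector $p$, with equality only at $p = \ps$.

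Second, I would introduce the auxiliary vector $q_j := 2^{-\min\gk{j,d-1}}$. The telescoping identity $\sum_{j=1}^{d-1} 2^{-j} + 2^{-(d-1)} = 1$ shows that $q$ is a probability distribution on $\gk{1,\ldots,d}$, and in fact $q = \ps$. This is the key observation: the piecewise definition of $\ps$ (the last two coordinates being equal) is precisely what is needed to make the candidate masses $2^{-\min\gk{j,d-1}}$ sum to one. Any other choice of exponents would leave a deficit or excess and spoil the argument.

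Third, I would invoke Gibbs' inequality applied to $p$ and $q$:
\[
0 \;\le\; D(p \,\|\, q) \;=\; \sum_{j=1}^d p_j \log\frac{p_j}{q_j} \;=\; -H(p) + (\log 2)\sum_{j=1}^d \min\gk{j,d-1}\, p_j ,
\]
which rearranges to exactly the required inequality. The equality condition for KL divergence forces $p_j = q_j$ for every $j$ with $p_j>0$, hence $p = \ps$, which simultaneously establishes that $\ps$ is the unique minimizer. The value at $\ps$ follows from the identity $H(\ps) = (\log 2)\sum_j \min\gk{j,d-1}\, \ps_j$ read off the equality case, giving the ratio $1/\log 2$ as claimed.

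I do not anticipate a genuine obstacle: once the numerator is rewritten as $\sum_j \min\gk{j,d-1}\, p_j$ and one notices that $\sum_j 2^{-\min\gk{j,d-1}} = 1$, the whole argument collapses to a single application of Gibbs' inequality, and boundary issues (some $p_j = 0$) are handled automatically by the convention $0\log 0 = 0$. A Lagrange-multiplier approach would also work but would require checking second-order conditions and the boundary of the simplex separately, whereas the information-theoretic route bypasses both.
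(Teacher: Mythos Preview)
Your argument is correct, and it is genuinely different from the paper's. The paper proceeds via Lagrange multipliers: it writes the objective as $N/D$, differentiates with respect to each $p_i$, and from the first-order conditions extracts $p_{d-1}=p_d$ together with a constant ratio $p_{i+1}/p_i$, which pins down the geometric form of $\ps$; it then evaluates $N$ and $D$ at $\ps$ explicitly to obtain $1/\log 2$. Your route is to rewrite the numerator as $\sum_j \min\{j,d-1\}\,p_j$, recognise that $q_j=2^{-\min\{j,d-1\}}$ is itself a probability vector (namely $\ps$), and apply Gibbs' inequality $D(p\|q)\ge 0$ to obtain the global lower bound $N/D\ge 1/\log 2$ together with the uniqueness of the minimizer in one stroke.

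What each buys: the Lagrange approach is the natural first thing to try and makes transparent why the last two coordinates of $\ps$ coincide (their partial derivatives of $N$ both vanish), but it only locates critical points and, as written in the paper, does not verify second-order conditions or rule out boundary extrema; uniqueness is asserted rather than proved. Your information-theoretic argument is shorter and strictly stronger: it yields a global inequality over the entire simplex, handles the boundary via $0\log 0=0$, and gives uniqueness for free from the equality case of KL divergence. The price is that one has to spot the identity $\sum_j 2^{-\min\{j,d-1\}}=1$, which is less mechanical than setting derivatives to zero.
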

 Note that the above lemma establishes the final claim in Theorem \ref{THM1} and also confirms the prediction from \cite{deshpande2022correction}.
\begin{proof}
We abbreviate
\begin{equation}
       \frac{\sum_{k=0}^{d-2}\Fq(k)}{-\sum_{j=1}^d p_j\log p_j}=\frac{N}{D}\, ,
\end{equation}
in order to keep the ensuing equations shorter. Suppose that $\mu$ is our Lagrange parameter, we obtain that for $i\le d-2$
\begin{equation}
    \mu=\frac{\d }{\d p_i}\frac{N}{D}=\frac{D(d-1-i)+N\rk{1+\log p_i}}{D^2}\, ,
\end{equation}
as the parameter $p_i$ appears $(d-1-i)$--times in the sum $\sum_{k=0}^{d-2}\Fq(k)$. However, the parameters $p_{d-1}$ and $p_d$ do not appear in the numerator and hence for $i=d-1,d$
\begin{equation}
    \mu=\frac{\d }{\d p_i}\frac{N}{D}=\frac{N\rk{1+\log p_i}}{D^2}\, .
\end{equation}
This implies that $p_{d-1}=p_d$. Using the two equations for $\mu$ and multiplying by $D^2$, shows that for $1\le i<j<d$
\begin{equation}
    D(j-i)=N\log \frac{p_j}{p_i}\, .
\end{equation}
Hence, for the above choice of $i,j$
\begin{equation}
   \frac{N}{D}= \frac{j-i}{\log\frac{p_j}{p_i}}\, .
\end{equation}
Choosing $j=i+1$, we obtain that for some $c>0$
\begin{equation}
    \frac{p_{i+1}}{p_i}=c\quad\text{ for all }i<d-1\text{, and hence }p_i=2^{-i}\, ,\quad\text{ for all }i<d\, .
\end{equation}
Write $\ps=\ps_d$ for the above distribution, given by $\ps_i=2^{-\min\{i,d-1\}}$,
\begin{equation}
    \ps_2=\rk{\frac{1}{2},\frac{1}{2}}\qquad\text{and}\qquad \ps_4=\rk{\frac{1}{2},\frac{1}{4},\frac{1}{8},\frac{1}{8}}\, ,
\end{equation}
for example.

We have that for $p=\ps$
\begin{equation}
    N=\sum_{k=0}^{d-2}\Fq(k)=\sum_{k=0}^{d-2}2^{-k}=2-2^{-d+2}\, .
\end{equation}
For the denominator, one obtains
\begin{equation}
    D=-\sum_{j=1}^d p_j\log p_j=\log(2)\rk{\sum_{j=1}^{d-1}j2^{-j}+(d-1)2^{-d+1}}=\log(2)\rk{2-2^{-d+2}}\, ,
\end{equation}
where we have used the finite geometric sum formula in the last step. The two equations above imply that for our throughput maximizing distribution, one obtains that
\begin{equation}
     \frac{L_n}{n}=\frac{1}{\log(2)}+g_1(n)+o\rk{n^{-1}}\, ,
\end{equation}
This concludes the proof.

Alternatively, one can use the following inductive argument why for $\ps$, $L_n$ remains constant as $d\ge 2$ varies: for $d=3$, one can combine the two $1/4$ weighted branches into one. As the $1/2$ weighted branch has the same law as the one for $d=2$ and $L_n$ is additive in the branches, this shows that the $L_n$ is the same for $d=2$ and $d=3$, given $\ps$ as splitting probability. One can then inductively carry this over to higher $d$'s. 
\end{proof}
For $\ps$, we also obtain
\begin{equation}
    \frac{C_n}{n}\sim \frac{1}{2\log(2)}+g_2(n)\qquad\text{and}\qquad\frac{S_n}{n}\sim \frac{1}{2}+g_3(n)\, ,
\end{equation}
independent of $d$, the cardinality of the split.

Note that we can minimize $C_n$ by setting $p_i=0$ for $i<d$ and $p_d=1$. However, this is not a sensible choice, as the algorithm will never terminate. Furthermore, the first term in Equation \eqref{EquationNumberOfSuccesses} is maximized for the same choice of $p$.
\subsection{Collisions vs throughput}
\label{SubsectionCollisionsVsThroughput}
As we have seen in the previous section, the throughput-maximizing distribution does not minimize collisions for SICTA. We show that a small reduction in throughput can lead to a large reduction in the number of collisions. For example, a 20\% reduction in throughput allows for a 39\% reduction of occurring collisions, from $0.72$ collisions per package down to $0.44$ collisions per package. Below, we have plotted the minimal achievable collision rate, given a throughput reduction of at most $x$-percent, where $x$ ranges from $0\%$ to $20\%$. 
\begin{figure}[h]
    \centering
    \includegraphics[width=0.75\linewidth]{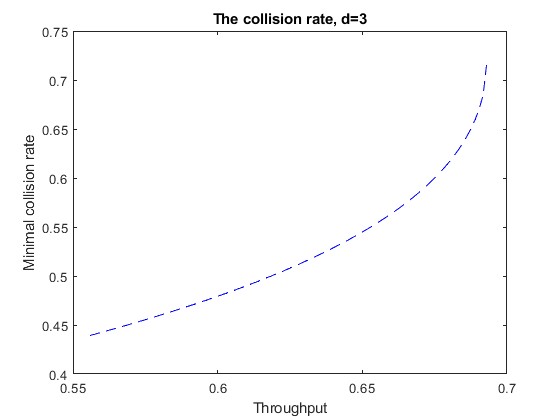}
    \caption{The minimal obtainable collision rate, constrained by achieving a certain throughput rate. The figure was obtained numerically using a standard solver for constraint non-linear optimisation problems. $\ps$ was used as a the initial value.}
    \label{figcollisionrate}
\end{figure}
The graph in Figure \ref{figcollisionrate} does not change as we vary the number of branches $d$. The corresponding probability distributions $p\in [0,1]^d$ can be obtained by using a standard constrained nonlinear multivariable solver, using $\ps$ as initial value.

\subsection{Delay Analysis}
\label{subsectionDelayAnalysis}
In this section, we look at SICTA with gated access. We give recursive formulas which allow for an approximation of the mean delay as well as transition matrix of the \ac{CRI} lengths.

We now assume that packets arrive at random times with an arrival rate $\l>0$. Packets wait and accumulate until the algorithm has resolved the previous collision. We define $\gk{c_{k+1}}_{k=0}^\infty$ the (random) sequence where $c_k$ is the length of the $k$-th \ac{CRI} assuming a Poisson arrival rate $\l>0$ of new packets. Its randomness is twofold, once from the \ac{CRI} itself, but also from the Poisson arrival of new packages. Let $\gk{s_{k+1}}_{k=0}^\infty$ be the number of packages arriving during the $k$-th \ac{CRI}. If we condition on $c_k=i$, $s_{k+1}$ is $\poi(\l i)$ distributed. Hence, we have that $\gk{c_{k+1}}_{k=0}^\infty$ is a Markov chain. Let $\pi=\gk{\pi_i}_i$ be the invariant distribution, which exists for $\l<\mathrm{MST}$, see Equation \eqref{eq:proxy_mst} or \cite{molle1992computation}. The probability that a tagged packet joins the system during a \ac{CRI} of length $n$ is given by
 \begin{equation}
     \wpi_n=\frac{n\pi_n}{\sum_{j=1}^\infty j\pi_j }\, ,
 \end{equation}
 see also \cite{molle1992computation}.
 
 Let $t=t_0+t_2$ the total delay of a given packet, made up from waiting $t_0$ slots for the previous \ac{CRI} to finish and then the time in the algorithm itself, denoted by $t_2$. Note that $t_0\perp t_2$ and that $t_0\sim \Ucal(0,n)$. The distribution of $t_2$ is given by $l_{1+R_n}$, where $R_n=\poi(\l n)$.
 \subsubsection{Steady-State Distribution of the \ac{CRI} Length.}
In this section, we state a functional recursive relation which allows for the computation of the moment generating function $\mgfl(x,z)$ up to arbitrary order. This recursive relation also allows for an asymptotic computation of the transition matrix $P_{i,j}=\P\rk{c_2=j|c_1=i}$.

\begin{proposition}\label{PropositionMGF}
Recall the moment generating function $Q(x,z)=\ex^{-x}\sum_{n\ge 0}x^n \E\ek{z^{l_n}}/n!$, used for the computation of moments of $l_n$. Write
\begin{equation}
    Q(x,z)=\sum_{j\ge 0}z^j q_j(x)\, ,
\end{equation}
where 
\begin{equation}
    q_j(x)=\sum_{n=0}^\infty \P\rk{l_n=j}\ex^{-x}\frac{x^n}{n!}\, .
\end{equation}
Then, there exists a recursive equation which for every $j\ge 1$ gives $q_j(z)$ in terms of $\gk{q_i(z)}_{i=0}^{j-1}$, see Equation \eqref{EquationSystemOfEq}. Furthermore, $q_0(z)=0$.
\end{proposition}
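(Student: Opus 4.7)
The plan is to convert the functional equation of Proposition \ref{PropositionFunctionalRelation} into one for the normalized moment generating function $\mgfl(x,z) = \ex^{-x}\tmgfl(x,z)$, and then read off the coefficient of $z^j$. First I would multiply Equation \eqref{EquationRelQ} by $\ex^{-x}$. Since $\sum_{i=1}^d p_i = 1$, one has $\ex^{-x} = \prod_{i=1}^d \ex^{-xp_i}$, which absorbs cleanly into the full product; for the partial products, the leftover exponential is $\ex^{-x\Fq(k)}$ because $p_1 + \cdots + p_k = 1 - \Fq(k)$. This yields the equivalent functional equation
\begin{equation}\label{eq:qplan}
\mgfl(x,z) = \prod_{i=1}^d \mgfl(xp_i,z) + (z - z^2)\sum_{k=0}^{d-2}\rk{1 + \Fq(k)x}\ex^{-x\Fq(k)}\prod_{i=1}^k \mgfl(xp_i,z).
\end{equation}

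Next I would establish $q_0 \equiv 0$ from the probabilistic definition: the base case $l_0 = l_1 = 1$ together with Equation \eqref{Equationln} inductively gives $l_n \ge 1$ for every $n$, so $\P(l_n = 0) = 0$ and hence $q_0(x) = 0$. Equivalently, setting $z = 0$ in \eqref{eq:qplan} gives $q_0(x) = \prod_{i=1}^d q_0(xp_i)$, which together with analyticity at the origin and $q_0(0) = 0$ forces $q_0 \equiv 0$ whenever $d \ge 2$.

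Then I would expand each product as $\prod_{i=1}^k \mgfl(xp_i, z) = \sum_{m\ge 0} z^m \sum_{\mu \in \Pfrak_m^k} \prod_{i=1}^k q_{\mu_i}(xp_i)$ and read off the coefficient of $z^j$ in \eqref{eq:qplan} for $j \ge 1$. Because $q_0 \equiv 0$, only compositions $\mu$ with every part $\mu_i \ge 1$ survive, in which case $\mu_i \le m - (k-1)$. For the coefficient of $z^j$ in $\prod_{i=1}^d \mgfl(xp_i,z)$ this gives $\mu_i \le j - (d-1) < j$ whenever $d \ge 2$; for the $k \ge 1$ terms of the SIC correction, the two relevant contributions are the coefficients of $z^{j-1}$ and $z^{j-2}$ in $\prod_{i=1}^k \mgfl(xp_i,z)$, whose surviving summands satisfy $\mu_i \le j - k < j$. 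The $k = 0$ summand is the purely analytic piece $(z - z^2)(1+x)\ex^{-x}$ and feeds only into $q_1$ and $q_2$. Assembling the contributions yields a closed recursion of the shape
\begin{equation}
q_j(x) = \mathcal{F}_j\rk{x;\, q_0(xp_1),\ldots,q_{j-1}(xp_d)},
\end{equation}
which is the desired Equation \eqref{EquationSystemOfEq}. As a sanity check, at $j = 1$ only the $k = 0$ term contributes and one recovers $q_1(x) = (1+x)\ex^{-x}$, matching $\P(l_n = 1) = \1\gk{n \in \gk{0,1}}$.

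The only real obstacle is organizational bookkeeping: splitting $(z - z^2)$ into its contributions at $z^{j-1}$ and $z^{j-2}$, separating the $k = 0$ boundary term from the generic $k \ge 1$ terms, and verifying that the composition bound $\mu_i < j$ genuinely holds in every surviving summand (which is where the hypothesis $d \ge 2$ is used). No analytic subtlety arises, since the factors $\ex^{-x\Fq(k)}$ and $1 + \Fq(k)x$ are carried along as fixed coefficient functions; an easy induction then shows that each $q_j(x)$ is a finite linear combination of polynomials in $x$ times exponentials $\ex^{-cx}$ with $c \in \gk{0, p_1, p_1 + p_2,\ldots, 1}$, which in principle makes the recursion fully explicit at any finite order.
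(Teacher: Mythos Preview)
Your proposal is correct and follows essentially the same route as the paper: multiply Equation~\eqref{EquationRelQ} through by $\ex^{-x}$ to obtain the functional equation for $\mgfl$, use $q_0\equiv 0$ to force every surviving composition $\mu$ to have all parts $\ge 1$ (hence $\mu_i\le j-1$ in each relevant product), and then compare coefficients of $z^j$. The paper packages the convolution coefficients as $Q(k,x,j)=\sum_{\mu\in\Pfrak_j^{(k)}}\prod_{i=1}^k q_{\mu_i}(xp_i)$ and records the same parts-bound argument you give; your additional remarks (the $z=0$ check for $q_0$, the $j=1$ sanity check, and the polynomial-times-exponential structure of $q_j$) are correct but not needed for the proof.
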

Before embarking on a proof of the above proposition, we show how it enables us to calculate the transition matrix of the \ac{CRI} lenths:

Recall that the arrival rate is $\l>0$ and that new packets arrive according to a Poisson process with parameter $\l>0$.
\begin{corollary}
The probability at steady-state to observe a \ac{CRI} length of $j$ after having observed a \ac{CRI} length of  of $i$ is given by
\begin{equation}
    P_{i,j}=q_j(\l i)\, .
\end{equation}
\end{corollary}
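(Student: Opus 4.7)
The statement is essentially a direct unpacking of the definitions, so the plan is mostly to make the conditioning precise rather than to develop new machinery. First I would note that, by the description preceding Proposition \ref{PropositionMGF}, the Markov chain $\gk{c_{k+1}}_{k\ge 0}$ evolves according to the two-step rule: given $c_k=i$, the number $s_{k+1}$ of packets that arrive during the current CRI is $\mathrm{Poi}(\l i)$-distributed, and then $c_{k+1}$ equals $l_{s_{k+1}}$, where $l_n$ denotes the CRI length for $n$ collided users as studied in Section \ref{sec:analysis}. Crucially, $l_{s_{k+1}}$ conditioned on $s_{k+1}=n$ is independent of $c_k$, since the splitting randomness driving $l_n$ is drawn afresh inside the new CRI.

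Next I would apply the tower property to split off the Poisson arrival layer:
\begin{equation}
    P_{i,j}=\P\rk{c_{k+1}=j\mid c_k=i}=\sum_{n=0}^\infty \P\rk{l_n=j}\cdot\P\rk{s_{k+1}=n\mid c_k=i}\, .
\end{equation}
Substituting the Poisson weights $\P\rk{s_{k+1}=n\mid c_k=i}=\ex^{-\l i}(\l i)^n/n!$ gives
\begin{equation}
    P_{i,j}=\sum_{n=0}^\infty \P\rk{l_n=j}\ex^{-\l i}\frac{(\l i)^n}{n!}\, ,
\end{equation}
which is precisely $q_j(\l i)$ as defined in Proposition \ref{PropositionMGF}. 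This establishes the claim.

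There is no real obstacle in this computation; everything follows once the Markov structure and the conditional independence of $l_{s_{k+1}}$ from $c_k$ (given $s_{k+1}$) are in place. The only subtlety worth spelling out is that the statement does not actually require stationarity of the chain: the transition probability $P_{i,j}$ is the same one-step kernel regardless of the initial distribution, and the phrase "at steady-state" in the corollary should be read simply as specifying that one is working with the steady-state Markov chain whose transitions are governed by $P_{i,j}$. Existence of the invariant distribution $\pi$ for $\l<\mathrm{MST}$, already noted before Proposition \ref{PropositionMGF}, ensures this steady-state regime is meaningful.
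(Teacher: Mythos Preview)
Your proposal is correct and follows exactly the same approach as the paper's proof, which also conditions on $s_{k+1}=n$, inserts the Poisson weights, and recognizes the resulting sum as $q_j(\l i)$. Your version is in fact more explicit about the conditional independence and the role of the steady-state assumption than the paper's brief justification.
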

Indeed, note that by doing a case distinction
\begin{equation}
    P_{i,j}=\sum_{n=0}^\infty \P\rk{s_{k+1}=n|c_k=i}\P\rk{l_n=j}=\sum_{n=0}^\infty \P\rk{l_n=j}\ex^{-\l i}\frac{\rk{\l i}^n}{n!}=q_j(\l i)\, ,
\end{equation}
where $q_j(x)$ was given in the proposition above. We now prove Proposition \ref{PropositionMGF}.

\textbf{Proof of Proposition \ref{PropositionMGF}:}
Equation \eqref{Equationln} gives
\begin{equation}\label{Initial conditions q_j}
    q_j(x)=\begin{cases}
        0&\text{ if }j=0\, ,\\
        (1+x)\ex^{-x}&\text{ if } j=1\, ,
        \end{cases}
\end{equation}
as $\P\rk{l_n=0}=0$ and $\P\rk{l_n=1}=\1\gk{n=0,1}$.

Recall that
\begin{equation}\label{EquationQOne}
    Q(x,z)=\sum_{j\ge 0}q_j(x)z^j\, .
\end{equation}
Now, we use Equation \eqref{EquationRelQ} to write
\begin{equation}\label{EquationQTwo}
    Q(x,z)=\prod_{j=1}^d Q(xp_j,z)+\sum_{k=0}^{d-2}\rk{z-z^2}\rk{1+\Fq(k)x}\ex^{-\Fq(k)x}\prod_{i=1}^k Q(x p_i,z)\, .
\end{equation}
Set
\begin{equation}\label{EquationQKXJ}
    Q(k,x,j)=\sum_{\mu\in \Pfrak_j\hk{k}}\prod_{i=1}^k q_{\mu_i}(xp_i)\, .
\end{equation}
Immediately
\begin{equation}
     Q(k,x,j)=\begin{cases}
         0 &\text{ if }j=0\, ,\\
         \sum_{i=0}^k \rk{1+p_i x}\ex^{-p_i x}&\text{ if }j=1\, .
     \end{cases}
\end{equation}
As $q_0(x)=0$, the largest value $\mu_i$ can take in Equation \eqref{EquationQKXJ} is $j-(k-1)$, as otherwise at least one of the other $\mu_j$'s has to be zero. This means that $Q(k,x,j)$ is a function of $\gk{q_i(z)}_{i=0}^{j-(k-1)}$.

Write $f_k(x)=\rk{1+\Fq(k)x}\ex^{-\Fq(k)x}$. Substituting Equation \eqref{EquationQOne} into Equation \eqref{EquationQTwo} yields (see Equation \eqref{IllustrationEq} for the mechanism)
\begin{multline}\label{EquationSystemOfEq}
    \sum_{j\ge 0}z^jq_j(x)\\
    =\sum_{j\ge 0}z^j\rk{Q(d,x,j)+\sum_{k=0}^{d-2} f_k(x)\rk{Q(k,x,j-1)\1\gk{j\ge 1}-Q(k,x,j-2)\1\gk{j\ge 2}}}\, .
\end{multline}
This system of equations is completely solvable for $q_j(x)$, as the coefficients on the right hand side of the depend only on $\gk{q_i(z)}_{i=0}^{j-1}$ for each $j$. Furthermore, the initial conditions for $q_j(x)$ are given in Equation \eqref{Initial conditions q_j}. This concludes the proof of Proposition \ref{PropositionMGF}.
\subsubsection{Collision Resolution Delay Analysis.}
In this section, we give a formula for the mean delay $\E\ek{t_2}$ caused by the resolution of the \ac{CRI} in steady state.

To calculate the expectation of $t_2$ given that the previous \ac{CRI} had length $n$, we do a case distinction: set $t_{2,m}$ the length of a tagged packet, given that there are $m$ other packages. Then
\begin{equation}
    \E\ek{t_2|c_k=n}=\sum_{m\ge 0}\E\ek{t_{2,m}}\ex^{-\l n}\frac{\rk{\l n}^m}{m!}=\sum_{m\ge 0}\sum_{k\ge 1}k\P\rk{t_{2,m}=k}\ex^{-x}\frac{x^m}{m!}\Big|_{x=\l n}\, ,
\end{equation}
which we abbreviate as $T_2(\l n)$.

Let $g\in\gk{1,\ldots,d}$ be the gate which the tagged packet joins. The evolution of $t_{2,m}$ is given by
\begin{equation}
    t_{2,m}=\begin{cases}
        1&\text{ if }m=0\, ,\\
        \1\gk{g<d}+\sum_{j=1}^{g-1} l_{I_j}+t_{2, I_g}&\text{ if }m\ge 1\, .
    \end{cases}
\end{equation}
Set
\begin{equation}
     G_{m+1}(z)=\E\ek{z^{t_{2,m}}}\qquad\text{and}\qquad G(x,z)=\sum_{m\ge 0}G_{m+1}(z)\ex^{-x}\frac{x^m}{m!}\, .
\end{equation}
We first state a proposition given a recursive equation for $G(x,z)$.
\begin{proposition}\label{ProposotionDelayG}
We have that
\begin{equation}\label{EquationGxz}
    G(x,z)=\sum_{k=1}^d p_k\rk{\ex^{-x}\rk{z-z^{k+\1\gk{k<d}}}+z^{\1\gk{k<d}}G(p_kx,z)\prod_{i=1}^{k-1}Q(p_ix,z)}\, ,
\end{equation}
where $Q$ is the moment generating function of $l_n$, as previously.
\end{proposition}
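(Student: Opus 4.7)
\textbf{Proof sketch for Proposition \ref{ProposotionDelayG}:}

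The plan is to mirror the derivation of the functional equation for $Q$ in Proposition \ref{PropositionFunctionalRelation}, starting from the recursion for $t_{2,m}$ and then Poissonizing over the number of other packets. First I would condition on the gate $g=k$ chosen by the tagged packet, which occurs with probability $p_k$ independently of everything else. Given $g=k$ and $m\ge 1$ other packets, the remaining $m$ packets split as $(I_1,\ldots,I_d)\sim\mathrm{Mult}(m,p)$, and the branches $\gk{l_{I_j}}_{j=1}^{k-1}$ and $t_{2,I_k}$ are conditionally independent given $(I_1,\ldots,I_d)$, so that
\begin{equation*}
    \E\ek{z^{t_{2,m}}|g=k} = z^{\1\gk{k<d}}\,\E\ek{\prod_{j=1}^{k-1} z^{l_{I_j}} \cdot z^{t_{2,I_k}}}.
\end{equation*}

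The second step is Poissonization: multiplying by $p_k \ex^{-x} x^m/m!$ and summing over $m \ge 0$ turns the multinomial $(I_1,\ldots,I_d)$ into independent Poissons with parameters $(p_1 x,\ldots,p_d x)$, and the expectation on the right factorizes into $\prod_{j=1}^{k-1} Q(p_j x, z) \cdot G(p_k x, z)$. This is exactly the same exponential mixing trick that produces $\prod_{j=1}^d Q(p_j x, z)$ in Proposition \ref{PropositionFunctionalRelation}, now with one of the factors replaced by $G$ at the gate containing the tagged packet.

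The third step handles the boundary $m=0$, which is the main piece of bookkeeping. The recursion is only valid for $m \ge 1$; at $m=0$ we have $t_{2,0}=1$ by definition, so $G_1(z)=z$. However, the Poissonized sum above runs over all $m \ge 0$, and its $m=0$ contribution uses $I_j=0$ for all $j$, giving $\sum_{j<k} l_{I_j} + t_{2,I_k} = (k-1)+1 = k$ and hence a spurious contribution of $z^{k+\1\gk{k<d}} \ex^{-x}$. Subtracting this spurious $m=0$ value and adding back the correct $z \ex^{-x}$ term, then pulling both corrections under the sum over $k$ via $\sum_k p_k = 1$, produces exactly the $\ex^{-x}\rk{z - z^{k+\1\gk{k<d}}}$ summand in \eqref{EquationGxz}.

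The main obstacle, and what I expect to spend the most time on, is this $m=0$ correction and keeping the two $\1\gk{k<d}$ indicators in the right places: one sits as an exponent multiplying $G(p_k x, z) \prod_{j<k} Q(p_j x, z)$, and the other is buried inside the exponent $k + \1\gk{k<d}$ of the correction term. Apart from that accounting, the derivation is a direct transposition of the argument used in Proposition \ref{PropositionFunctionalRelation}, with $Q$ replaced by $G$ on the branch containing the tagged packet and the preceding branches still contributing factors of $Q$.
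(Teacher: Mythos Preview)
Your proposal is correct and follows essentially the same route as the paper's own proof: condition on the gate $g=k$, use conditional independence of the branches given the multinomial split, Poissonize so that the multinomial becomes independent Poissons and the expectation factorizes into $G(p_kx,z)\prod_{j<k}Q(p_jx,z)$, and then repair the $m=0$ term. Your identification of the spurious $m=0$ contribution as $\ex^{-x}z^{k+\1\gk{k<d}}$ and of the correct one as $\ex^{-x}z$ (distributed over $k$ via $\sum_k p_k=1$) matches the paper's bookkeeping exactly; the only cosmetic difference is that the paper writes out the factorial manipulations explicitly where you invoke the Poisson--multinomial splitting property.
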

Before proving the above proposition, we explain how one can use it to obtain a formula for $T_2(x)$:

Taking the derivative with respect to $z$ at $z=1$ in Equation \eqref{EquationGxz}, we obtain
\begin{equation}
   T(x)=\sum_{k=1}^d p_k\rk{\ex^{-x}\rk{1-{k-\1\gk{k<d}}}+{\1\gk{k<d}}+T(p_kx)+\sum_{i=1}^{k-1}L(p_ix)}\, ,
\end{equation}
where $L(x)$ is the Poisson generating function for $L_n$, as in the proof of Corollary \ref{CorollaryLNbinomialFormula}.
Using $\alpha_n$ defined in Equation \eqref{Equationalphan}, the above implies that for $T(x)=\sum_{n\ge 0}t_nx^n$
\begin{equation}
    t_n=\frac{1}{n!}\frac{\sum_{k=1}^dp_k\rk{(-1)^{n+1}\rk{k-\1\gk{k=d}}+\alpha_n \sum_{i=1}^{k-1}p_i^n}}{1-\sum_{k=1}^d p_k^{n+1}}\, .
\end{equation}
As in \cite{yu2007high}, from the above equation, one can calculate $t_n$ and then numerically approximate the average delay $T_2(\l n)$.

\textbf{Proof of Proposition \ref{ProposotionDelayG}}:
Recall that $g\in\gk{1,\ldots,d}$ is the gate the tagged particle joins. Define
\begin{equation}
    G_{m+1}\hk{k}(z)=\E\ek{z^{t_{2,m}}|g=k}\, .
\end{equation}
Note that by doing a case distinction
\begin{equation}
    G_{m+1}(z)=\sum_{k=1}^d p_kG_{m+1}\hk{k}(z)\, .
\end{equation}
Furthermore, by conditioning that $\mu_i$ users join slot $i$, one obtains
\begin{equation}
     G_{m+1}\hk{k}(z)=z^{\1\gk{k<d}}\sum_{\mu\in\Pfrak_m\hk{d}}\binom{m}{\mu}p(\mu)G_{\mu_k+1}(z)\prod_{i=1}^{k-1}Q_{\mu_i}(z)\, .
\end{equation}
Recall that
\begin{equation}
    G(x,z)=\sum_{m\ge 0}G_{m+1}(z)\ex^{-x}\frac{x^m}{m!}\, .
\end{equation}
We can substitute to obtain 
\begin{multline}
        G(x,z)=\ex^{-x}z+\sum_{k=1}^d\sum_{m\ge 1}z^{\1\gk{k<d}}\\
        \cdot \sum_{\mu\in\Pfrak_m\hk{d}}\rk{\frac{G_{\mu_k+1}(z)p_k^{\mu_k+1}}{\mu_k!}\prod_{i=1}^{k-1}\frac{Q_{\mu_i}(z)\rk{p_i x}^{\mu_i}\ex^{-p_i x}}{\mu_i!}}\rk{\prod_{i=k+1}^d\frac{(p_ix)^{\mu_i}\ex^{-p_i x}}{\mu_i!}}\, ,
\end{multline}
Note that for $m=0$, the sum equals $p_k\ex^{-x}z^{k}$ and ehnce
\begin{multline}
    \sum_{m\ge 1}\sum_{\mu\in\Pfrak_m\hk{d}}\rk{\frac{G_{\mu_k+1}(z)p_k^{\mu_k+1}}{\mu_k!}\prod_{i=1}^{k-1}\frac{Q_{\mu_i}(z)\rk{p_i x}^{\mu_i}\ex^{-p_i x}}{\mu_i!}}\rk{\prod_{i=k+1}^d\frac{(p_ix)^{\mu_i}\ex^{-p_i x}}{\mu_i!}}\\
    =-p_k\ex^{-x}z^{k}+\sum_{m\ge 0}\sum_{\mu\in\Pfrak_m\hk{d}}\rk{\frac{G_{\mu_k+1}(z)p_k^{\mu_k+1}}{\mu_k!}\prod_{i=1}^{k-1}\frac{Q_{\mu_i}(z)\rk{p_i x}^{\mu_i}\ex^{-p_i x}}{\mu_i!}}\rk{\prod_{i=k+1}^d\frac{(p_ix)^{\mu_i}\ex^{-p_i x}}{\mu_i!}}\, .
\end{multline}
Now, we split the sum by first considering the subpartition $\gk{\mu_1,\ldots,\mu_k}$ whose cardinality we denote by $i$, and then the remaining partition $\gk{\mu_{k+1},\ldots,\mu_{d}}$ which consists of $d-k$ parts:
\begin{multline}
    \sum_{m\ge 0}\sum_{\mu\in\Pfrak_m\hk{d}}\rk{\frac{G_{\mu_k+1}(z)p_k^{\mu_k+1}}{\mu_k!}\prod_{i=1}^{k-1}\frac{Q_{\mu_i}(z)\rk{p_i x}^{\mu_i}\ex^{-p_i x}}{\mu_i!}}\rk{\prod_{i=k+1}^d\frac{(p_ix)^{\mu_i}\ex^{-p_i x}}{\mu_i!}}\\
    =\!\rk{\!\sum_{i=0}^\infty\!\sum_{\mu\in\Pfrak_i\hk{k}}\!\rk{\frac{G_{\mu_k+1}(z)p_k^{\mu_k+1}}{\mu_k!}\prod_{i=1}^{k-1}\frac{Q_{\mu_i}(z)\rk{p_i x}^{\mu_i}\ex^{-p_i x}}{\mu_i!}}\!}\!\sum_{m=0}^\infty\! \sum_{\mu\in \Pfrak_m\hk{d-k}}\!\rk{\prod_{i=k+1}^d\frac{(p_ix)^{\mu_i}\ex^{-p_i x}}{\mu_i!}} .
\end{multline}
Hence, we get that
\begin{equation}
    G(x,z)=\sum_{k=1}^d p_k\rk{\ex^{-x}\rk{z-z^{k+\1\gk{k<d}}}+z^{\1\gk{k<d}}G(p_kx,z)\prod_{i=1}^{k-1}Q(p_ix,z)}\, .
\end{equation}
This concludes the proof of Proposition \ref{ProposotionDelayG}.


\section{Discussions and Conclusion}
\label{sec:conclusion}
We have calculated the mean throughput, number of collisions, successes and idle slots for tree algorithms with successive interference cancellation. We have furthermore given a recursive relation which allows for approximations of to arbitrary order for the moment generating function of the \ac{CRI} length as well as the mean delay in steady state. We have shown numerically that a small reduction in throughput can lead to a bigger reduction in the number of collisions. However, our methods can be used for other observables of the random tree. We hence believe that by emulating our approach above, more properties of random tree algorithms can be calculated.


\bibliographystyle{abbrv}
\bibliography{Bibliography}

\begin{thebibliography}{10}

\bibitem{andreev2011practical}
S.~Andreev, E.~Pustovalov, and A.~Turlikov.
\newblock A practical tree algorithm with successive interference cancellation
  for delay reduction in ieee 802.16 networks.
\newblock In {\em Analytical and Stochastic Modeling Techniques and
  Applications: 18th International Conference, ASMTA 2011, Venice, Italy, June
  20-22, 2011. Proceedings 18}, pages 301--315. Springer, 2011.

\bibitem{capetanakis1979tree}
J.~Capetanakis.
\newblock Tree algorithms for packet broadcast channels.
\newblock {\em IEEE Trans. Info. Theory}, 25(5):505--515, Sept. 1979.

\bibitem{deshpande2022correction}
Y.~Deshpande, C.~Stefanovi{\'c}, H.~M.~G\"{u}rsu, and W.~Kellerer.
\newblock Correction to: High-throughput random access using successive
  interference cancellation in a tree algorithm.
\newblock {\em IEEE Transactions on Information Theory}, pages 1--1, 2022.

\bibitem{drmota2009random}
M.~Drmota.
\newblock {\em Random trees: an interplay between combinatorics and
  probability}.
\newblock Springer Science \& Business Media, 2009.

\bibitem{evseev2007interrelation}
G.~Evseev and A.~Turlikov.
\newblock Interrelation of characteristics of blocked rma stack algorithms.
\newblock {\em Problems of Information Transmission}, 43(4):344--352, 2007.

\bibitem{fayolle1986functional}
G.~Fayolle, P.~Flajolet, and M.~Hofri.
\newblock On a functional equation arising in the analysis of a protocol for a
  multi-access broadcast channel.
\newblock {\em Advances in applied probability}, 18(2):441--472, 1986.

\bibitem{FLAJOLET19953}
P.~Flajolet, X.~Gourdon, and P.~Dumas.
\newblock Mellin transforms and asymptotics: Harmonic sums.
\newblock {\em Theoretical Computer Science}, 144(1):3--58, jun 1995.

\bibitem{holmgren2012novel}
C.~Holmgren.
\newblock Novel characteristics of split trees by use of renewal theory.
\newblock {\em Electron. J. Probab}, 17(5):1--27, 2012.

\bibitem{janson1997analysis}
S.~Janson and W.~Szpankowski.
\newblock Analysis of an asymmetric leader election algorithm.
\newblock {\em The Electronic Journal of Combinatorics}, pages R17--R17, 1997.

\bibitem{konig2023throughput}
W.~K{\"o}nig and C.~Kwofie.
\newblock The throughput in multi-channel (slotted) aloha: large deviations and
  analysis of bad events.
\newblock {\em arXiv:2301.08180}, 2023.

\bibitem{konig2022multi}
W.~K{\"o}nig and H.~Shafigh.
\newblock Multi-channel aloha and csma medium-access protocols: Markovian
  description and large deviations.
\newblock {\em arXiv:2212.08588}, 2022.

\bibitem{massey1981collision}
J.~L. Massey.
\newblock Collision-resolution algorithms and random-access communications.
\newblock In {\em Multi-user communication systems}, pages 73--137. Springer,
  1981.

\bibitem{mathys1984analysis}
P.~Mathys.
\newblock {\em Analysis of random-access algorithms}.
\newblock PhD thesis, ETH Zurich, 1984.

\bibitem{mathys1985q}
P.~Mathys and P.~Flajolet.
\newblock Q-ary collision resolution algorithms in random-access systems with
  free or blocked channel access.
\newblock {\em IEEE Transactions on Information Theory}, 31(2):217--243, 1985.

\bibitem{molle1992computation}
M.~Molle and A.~Shih.
\newblock Computation of the packet delay in massey’s standard and modified
  tree conflict resolution algorithms with gated access.
\newblock {\em Tech. Rep. CSRI-264.}, 1992.

\bibitem{peeters2015}
G.~Peeters and B.~Van~Houdt.
\newblock On the capacity of a random access channel with successive
  interference cancellation.
\newblock In {\em 2015 IEEE International Conference on Communication Workshop
  (ICCW)}, pages 2051--2056, 2015.

\bibitem{peeters2009}
G.~T. Peeters and B.~Van~Houdt.
\newblock On the maximum stable throughput of tree algorithms with free access.
\newblock {\em IEEE Transactions on Information Theory}, 55(11):5087--5099,
  2009.

\bibitem{stefanovic2021tree}
C.~Stefanovi{\'c}, Y.~Deshpande, H.~M. G{\"u}rsu, and W.~Kellerer.
\newblock Tree-algorithms with multi-packet reception and successive
  interference cancellation.
\newblock {\em arXiv:2108.00906}, 2021.

\bibitem{stefanovic2020tree}
C.~Stefanovi{\'c}, H.~M. G\"{u}rsu, Y.~Deshpande, and W.~Kellerer.
\newblock Analysis of tree-algorithms with multi-packet reception.
\newblock In {\em GLOBECOM 2020 - 2020 IEEE Global Communications Conference},
  pages 1--6, 2020.

\bibitem{yu2007high}
Y.~Yu and G.~B. Giannakis.
\newblock High-throughput random access using successive interference
  cancellation in a tree algorithm.
\newblock {\em IEEE Transactions on Information Theory}, 53(12):4628--4639,
  2007.

\end{thebibliography}

\end{document}